\theoremstyle{plain}
\newtheorem{theorem}{Theorem}
\theoremstyle{definition}
\newtheorem{definition}{Definition}
\newtheorem{proposition}{Proposition}
\newtheorem{remark}{Remark}
\newtheorem{lemma}[theorem]{Lemma}
\newtheorem{hypothesis}[theorem]{Hypothesis}
\def\sqr#1#2{{\vcenter{\vbox{\hrule height .#2pt \hbox{\vrule
 width .#2pt height#1pt \kern#1pt \vrule
width .#2pt} \hrule height .#2pt}}}}
\def\ds{\begin{displaystyle}}
\def\eds{\end{displaystyle}}
\def\dis{\displaystyle }
\def\<{\langle }
\def\>{\rangle }
\def\R{\mathbb R}
\def\Z{\mathbb Z}
\def\E{\mathbb E}
\def\P{\mathbb P}
\def\K{\mathbb K}
\title{Dissipative backward stochastic differential equations with locally Lipschitz nonlinearity.}
\author{Fulvia Confortola\\[.3em]
\normalsize\it
Dipartimento di Matematica, Politecnico di Milano\\
\normalsize\it
piazza Leonardo da Vinci 32, 20133 Milano, Italy\\
\normalsize\tt
fulvia.confortola@polimi.it}
\begin{document}

\maketitle

\begin{abstract} In this paper we study a class of backward stochastic differential
equations (BSDEs) of the form
$$
dY_t= -AY_tdt  -f_0(t,Y_t)dt -f_1(t,Y_t,Z_t)dt + Z_tdW_t ,\, 0 \leq t \leq  T; \,
Y_T= \xi
$$
in an infinite dimensional Hilbert space $H$, where the unbounded
operator $A$ is sectorial and dissipative and the nonlinearity $f_0(t,y)$ is dissipative and
defined for $y$ only taking values in a subspace of $H$. A typical
example is provided by the so-called polynomial nonlinearities. Applications are given to stochastic partial differential equations and spin systems.
\end{abstract}

\textbf{Key words}
Backward stochastic differential equations, stochastic evolution equations. 

\textbf{MSC classification.} Primary: 60H15  Secondary: 35R60


\section{Introduction}

Let $H,K$ be real separable Hilbert spaces with norms $| \cdot|_H$ and $| \cdot|_K$. Let $W$ be a cylindrical Wiener process in $K$ defined on a
probability space $(\Omega,\mathcal{F},\P)$ and let $\{\mathcal{F}_t\}_{t \in
 [0,T]}$ denote its natural augmented
filtration. Let $\mathcal{L}^2(K,H)$ be the Hilbert space of Hilbert-Schmidt operators from $K$ to $H$.

We are interested in solving the following backward stochastic
differential equation
\begin{equation} \label{eq.int.intro}dY_t= -AY_tdt  - f(t,Y_t,Z_t)dt + Z_tdW_t
,\quad 0 \leq t \leq T, \quad Y_T= \xi
\end{equation}
where $\xi$ is a random variable with values in $H$, $f(t,Y_t,Z_t)=
f_0(t,Y_t) + f_1(t,Y_t,Z_t)$ and $ f_0,f_1$ are given
functions, and the operator $A$ is an unbounded operator with
domain $D(A)$ contained in $H$. The unknowns are the
processes $\{Y_t \}_{t \in
 [0,T]}$ and $\{ Z_t \}_{t \in
 [0,T]}$, which are required to be adapted with respect to the
 filtration of the Wiener process and take values in $H$, $\mathcal{L}^2(K,H)$ respectively.

In finite dimensional framework such type of equations
has been solved by Pardoux and Peng \cite{PPe1} in the nonlinear case. They
proved an existence and uniqueness result for the solution of the equation (\ref{eq.int.intro})
 when $A=0$, the
coefficient $f(t,y,z)$ is Lipschitz continuous in both variables
$y$ and $z$, and the data $\xi$ and the  process $\{f(t,0,0)\}_{t
\in [0,T]}$ are square integrable. Since this first result, many
papers were devoted to existence and uniqueness results under
weaker assumptions. In finite dimension, when $A=0$, the Lipschitz
condition on the coefficient $f$ with respect to the variable $y$
is replaced by a monotonicity assumption; moreover, more general
growth conditions in the variable $y$ are formulated. Let us
mention the contribution of Briand and Carmona \cite{BC}, for a
study of polynomial growth in $L^p$ with $p>2$, and the work of
Pardoux \cite {P3} for an arbitrary growth. In \cite{PR1} Pardoux
and Rascanu deal with a BSDE involving the subdifferential of a
convex function; in particular, one coefficient is  not everywhere
defined for $y$ in ${\R}^k$.

In other works the existence of the solution is proved when the data, $\xi$ and the
process $\{f(t,0,0)\}_{t \in [0,T]}$, are in $L^p$ for $p \in (1,2)$. El Karoui, Peng
and Quenez \cite{EKPQ} treat the case when $f$ is Lipschitz continuous; in \cite{BDHPS}
this result is generalized to the case of a monotone coefficient $f$ (both for equations
on a fixed and on a random time interval) and is studied even the case $p=1$.

In the infinite-dimensional framework Hu and Peng \cite{HP}, and
Oksendal and Zhang \cite{OZ} give an existence and uniqueness
result for the equation with an operator $A$, infinitesimal generator of a strongly continous semigroup and the
coefficient $f$ Lipschitz in $y$ and $z$. Pardoux
and Rascanu \cite{PR2} replace the operator $A$ with
the
subdifferential of a convex function and assume that $f$ is
dissipative, everywhere defined and continuous with respect to $y$, Lipschitz with
respect to $z$ and with linear growth in $y$ and $z$.

Special results deal with stochastic backward partial differential equations
(BSPDEs): we recall in particular the works of Ma and Yong \cite{MY1} and \cite{MY2}.
Earlier, Peng \cite{Pe} studied a backward stochastic partial
differential equation and regarded the classical Hamilton-Jacobi-Bellman equation of optimal stochastic control as
special case of this problem.

Our work extends these results in a special direction. We consider
an operator $A$ which is the generator of an analytic contraction
semigroup on $H$ and a coefficient $f(t,y,z)$ of the form $
f_0(t,y)+ f_1(t,y,z)$. The coefficient $f_1(t,y,z)$ is assumed to
be bounded and Lipschitz with respect to $y$ and $z$. The term
$f_0(t,y)$ is defined for $y$ only taking values in a suitable
subspace $H_{\alpha}$ of $H$ and it satisfies the following growth
condition for some $1<\gamma <1/ \alpha$, $S \geq 0$, $\P$-a.s.

         $\quad \quad \quad \quad |f_0(t,y)|_H \leq S(1+ ||y||_{H_{\alpha}}^{\gamma}) \quad \forall
t \in [0,T], \quad \forall y \in H_{\alpha}.$

Following \cite{HP}, we understand the equation (\ref{eq.int.intro}) in the following integral
form
\begin{equation}\label{pb}
Y_t -\int_t^T e^{(s-t)A}[f_0(s,Y_s)+f_1(s,Y_s,Z_s)]ds + \int_t^T
e^{(s-t)A}Z_sdW_s= e^{(T-t)A}\xi,
\end{equation}
requiring, in particular, that $Y$ takes values in $H_{\alpha}$. This
requires generally that the final condition also takes values in the
smaller space $H_{\alpha}$.
We take as $H_{\alpha}$ a real interpolation space which belongs to the class
$J_{\alpha}$ between $H$ and the domain of an operator $A$ (see Section \ref{1}). Moreover $f_0(t, \cdot)$ is assumed to be locally Lipschitz from $H_{\alpha}$ into $H$ and dissipative in $H$.
We prove (Theorem \ref{cap4.T2}) that if $\xi$ takes its values in the closure of $D(A)$
in $H_{\alpha}$ and is such that $||\xi||_{H_{\alpha}}$ is essentially bounded, then
equation (\ref{pb}) has a unique mild solution, i.e. there exists a unique pair of
progressively measurable processes $Y: \Omega \times[0,T] \rightarrow H_{\alpha}$, $Z:
\Omega \times[0,T] \rightarrow {\mathcal{L}}^2(K;H)$, satisfying $\P$-a.s. equality
(\ref{pb}) for every $t$ in $[0,T]$ and such that $\E
\sup_{t\in[0,T]}||Y_t||_{H_{\alpha}}^2+\E\int_0^T ||Z_t||_{\mathcal{L}^2(K,H)}^2dt< \infty.$

This result extends former results concerning the
deterministic case to the stochastic framework: see \cite{L2}, where previous works of
Fujita - Kato \cite{FK}, Pazy \cite{Pa} and others are collected. In these papers similar assumptions are made on the coefficients $f_0$, $f_1$ and on the operator $A$.

The plan of the paper is as follows. In Section \ref{1} some notations and definitions are
fixed. In Section \ref{2} existence and uniqueness of the solution of a simplified equation are proved,
  where $f_1$ is a bounded progressively
measurable process which does not depend on $y$ and $z$. In Section \ref{3}, applying the previous result, a fixed point argument is used in order to prove our main result on
existence and uniqueness of a mild solution of (\ref{pb}). Section \ref{4} is devoted
to applications.

\section{ Notations and setting }\label{1}

The letters $K$ and $H$ will always denote two real separable Hilbert spaces. Scalar product
is denoted by $\langle \cdot, \cdot\rangle$; $\mathcal{L}^2(K;H)$ is the separable Hilbert space of
Hilbert-Schmidt operators from $K$ to $H$ endowed with the Hilbert-Schmidt norm.
$W=\{W_t \}_{t \in [0,T]}$ is a cylindrical Wiener
process with values in $K$, defined on a complete probability space $(\Omega,\mathcal{F}, \P)$. $\{\mathcal{F}_t\}_{t \in [0,T]}$ is
the natural filtration of $W$, augmented with the family of
$\mathbb{P}$-null sets of $\mathcal{F}$.

Next we define several classes of stochastic processes with values
in a Banach space $X$.
\begin{itemize}
\item
$L^2(\Omega\times [0,T];X)$
denotes the space of measurable $X$-valued
processes $Y$ such that $\dis{\left[\E \int_0^T |Y_\tau|^2\,d\tau\right]^{1/2}}$ is finite, identified up to modification.

\item
$L^2(\Omega; C([0,T];X))$ denotes the space of continuous $X$-valued
processes $Y$ such that
$\dis{\left[\E \sup_{\tau\in [0,T]}|Y_\tau|^2 \right]^{1/2}}
$ is finite, identified up to indistinguishability.

\item
$C^{\alpha}([0,T];X)$ denotes the space of $\alpha$-H\"{o}lderian functions
 on $[0,T]$ with values in $X$ such that $[f]_{\alpha}=\dis{\sup_{0\leq x<y\leq T}
 \frac{|f(x)-f(y)|}{(y-x)^{\alpha}}<\infty}$.

\end{itemize}

Now we need to  recall several preliminaries on semigroup and interpolation spaces. We refer the reader to \cite{L2} for the proofs and other related results.

A linear operator $A$ in a Banach space $X$, with domain $D(A) \subset X$, is called sectorial if there are constants $\omega \in \mathbb{R}$, $
\theta \in (\pi /2, \pi)$, $M>0$ such that
\begin{equation} \label{oper.sett}\left\{%
\begin{array}{ll}
    (i)  \quad \rho(A)
    \supseteq S_{\theta,\omega}=
    \{\lambda \in\mathbb{C}:\lambda\neq\omega, |arg(\lambda - \omega)|< \theta \}, \\
    (ii) \quad ||(\lambda I -A)^{-1}||_{\mathcal{L}(X)} \leq \frac{M}{|\lambda - \omega|}
    \quad \forall \lambda \in S_{\theta,\omega} \\
\end{array}%
\right.
\end{equation}
where $\rho(A)$ is the resolvent set of $A$.
 For every $t>0$,
(\ref{oper.sett}) allows us to define a linear bounded operator $e^{tA}$ in $X$, by
means of the Dunford integral
\begin{equation} \label{int.dunf} e^{tA}=\frac{1}{2 \pi i}\int_{\omega +\gamma_{r, \eta}} e^{t
\lambda}(\lambda I -A)^{-1}d \lambda, \quad t>0, \end{equation} where, $r>0,
\eta \in ( \pi/2, \pi)$ and $\gamma_{r, \eta}$ is the curve $\{\lambda \in \mathbb{C}: |arg
\lambda|=\eta, |\lambda|\geq r \} \cup \{\lambda \in \mathbb{C} : |arg \lambda| \leq \eta,
|\lambda|= r \}$, oriented counterclockwise. We also set $
\label{op.set.0}e^{0A}x=x, \forall x \in X.$
Since the function $\lambda \mapsto e^{t \lambda}R(\lambda,A)$ is holomorphic in
$S_{\theta,\omega}$, the definition of $e^{tA}$ is independent of the choice of $r$ and
$\eta$.
If $A$ is sectorial, the function $[0, + \infty) \rightarrow L(X)$, $t \mapsto e^{tA}$,
with $e^{tA}$ defined by (\ref{int.dunf}) is called \emph{analytic
semigroup generated by A} in $X$.
We note that for every $x \in X$ the function $t \mapsto e^{tA}x$
is analytic (and hence continuous) for $t>0$. $e^{tA}$ is a strongly continuous semigroup if and only if
 $D(A)$ is dense in $X$; in particular this holds if $X$ is a
reflexive space.

We need to introduce suitable classes of subspaces of $X$.
 \begin{definition}\label{DAap}Let $(\alpha,p)$ be two numbers such that $0< \alpha
 <1$, $
 1 \leq p\leq \infty$ or $(\alpha,p)=(1,
\infty)$. Then we denote with $D_A(\alpha,p)$ the space

$\quad \quad \quad \quad D_A(\alpha,p)= \{x \in X: t \mapsto v(t)=||t^{1-\alpha- 1/p}Ae^{tA}x|| \in L^p(0,1) \}$

where $||x||_{D_A(\alpha, p)}= ||x||_X + [x]_{\alpha}= ||x||_X + ||v||_{L^p(0,1)}.$

(We set as usual $1/ \infty=0$).
\end{definition}

We recall here some estimates for the function $t\mapsto e^{tA}$
 when $t\rightarrow 0$, which we will use in the sequel. For convenience, in the next
 proposition we set $D_A(0,p )=X, \quad p \in [1, \infty].$

 \begin{proposition} \label{2.2.9lun} Let $(\alpha,p)$, $(\beta,p) \in (0,1) \times [1, +\infty] \cup \{(1, \infty)
 \}$, $\alpha \leq \beta$. Then there exists $C=C(p;
  \alpha,\beta)$ such that $$ ||t^{- \alpha + \beta}e^{tA}||_{L(D_A(\alpha,p),D_A(\beta,p))} \leq C, \quad 0<t \leq 1.$$
 \end{proposition}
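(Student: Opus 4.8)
The plan is to prove the equivalent bound $\|t^{\beta-\alpha}e^{tA}x\|_{D_A(\beta,p)}\le C\|x\|_{D_A(\alpha,p)}$ for all $x\in D_A(\alpha,p)$ and $0<t\le1$. I will use throughout the standard consequences of sectoriality (\ref{oper.sett}): the bounds $\|e^{tA}\|_{L(X)}\le M_0$ and $\|tAe^{tA}\|_{L(X)}\le M_1$ for $0<t\le1$, obtained by estimating the Dunford integral (\ref{int.dunf}) and its derivative, together with the commutation of $A$ with $e^{sA}$ and the semigroup law. Recalling Definition \ref{DAap}, since $t^{\beta-\alpha}\le1$ the $X$-part of the norm costs nothing, $t^{\beta-\alpha}\|e^{tA}x\|_X\le M_0\|x\|_X$, so the whole problem reduces to bounding the seminorm $[e^{tA}x]_\beta$, i.e.\ the $L^p(0,1)$-norm of $s\mapsto s^{1-\beta-1/p}\|Ae^{(s+t)A}x\|$, by $Ct^{\alpha-\beta}\|x\|_{D_A(\alpha,p)}$. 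The diagonal case $\alpha=\beta$ already displays the mechanism and is immediate: $Ae^{(s+t)A}x=e^{tA}Ae^{sA}x$ and $\|e^{tA}\|_{L(X)}\le M_0$ give $[e^{tA}x]_\alpha\le M_0[x]_\alpha$, which is the claim since then $t^{\beta-\alpha}=1$.

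For $\alpha<\beta$ one must quantify the smoothing, and the key preliminary step is the pointwise estimate
\[
\|Ae^{\tau A}x\|\le C\,\tau^{\alpha-1}\,\|x\|_{D_A(\alpha,p)},\qquad 0<\tau\le2 .
\]
For $0<\tau\le1$ I would obtain it by averaging the seminorm of $x$: from $Ae^{\tau A}x=e^{(\tau-s)A}Ae^{sA}x$ one has $\|Ae^{\tau A}x\|\le M_0\|Ae^{sA}x\|$ for every $s\in(\tau/2,\tau)$; taking $p$-th powers, integrating in $s$ over $(\tau/2,\tau)$, and using that $s$ is comparable to $\tau$ on this interval to recover the weight $s^{1-\alpha-1/p}$ turns the right-hand side into $C\tau^{\alpha-1}[x]_\alpha$. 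The range $1<\tau\le2$ is trivial, $Ae^{\tau A}$ being uniformly bounded there while $\tau^{\alpha-1}$ stays bounded below. For $p=\infty$ the estimate is the definition of $D_A(\alpha,\infty)$, and for $\alpha=0$ it is just $\|\tau Ae^{\tau A}\|_{L(X)}\le M_1$ with $\|x\|_X$ in place of $[x]_\alpha$.

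Substituting this estimate (with $\tau=s+t\le2$) into the seminorm reduces everything, after raising to the $p$-th power, to the boundedness in $t$ of
\[
I(t)=t^{(\beta-\alpha)p}\int_0^1 s^{a}(s+t)^{b}\,ds,\qquad a=(1-\beta-1/p)p,\quad b=(\alpha-1)p .
\]
The substitution $s=tu$ gives $\int_0^1 s^{a}(s+t)^{b}\,ds=t^{(\alpha-\beta)p}\int_0^{1/t}u^{a}(1+u)^{b}\,du$, so the two powers of $t$ cancel and $I(t)=\int_0^{1/t}u^{a}(1+u)^{b}\,du\le\int_0^\infty u^{a}(1+u)^{b}\,du$. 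This improper integral is finite exactly under our hypotheses: $a>-1\Leftrightarrow\beta<1$ controls the origin, and $a+b<-1\Leftrightarrow\alpha<\beta$ controls infinity. The cases $p=\infty$ and $\beta=1$ run identically with suprema replacing the integrals.

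The main obstacle is the pointwise smoothing estimate of the second paragraph. Mere boundedness of $e^{tA}$ is not enough when $\alpha<\beta$: it would leave the weight $s^{(\alpha-\beta)p}$, which for $\alpha<\beta$ blows up at $s=0$ and is not integrable against a generic element of $D_A(\alpha,p)$, so one genuinely needs the analytic (not merely strongly continuous) character of the semigroup to manufacture the decay $\tau^{\alpha-1}$. The remaining subtlety is pure exponent bookkeeping, encapsulated in the Beta-type integral above: its convergence at infinity is equivalent to $\alpha\le\beta$ (strict off the diagonal), which is precisely what makes the powers of $t$ cancel and leaves a constant independent of $t$.
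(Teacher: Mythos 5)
Your proof is correct, but there is no in-paper argument to compare it against: the proposition is stated as one of the ``preliminaries on semigroup and interpolation spaces'' for which the paper explicitly refers the reader to \cite{L2} (the label indicates Proposition 2.2.9 of that book), so you have in effect supplied the proof the paper omits. Your argument is, in substance, the standard one: the $X$-part of the norm is trivial since $t^{\beta-\alpha}\le 1$; the diagonal case follows from the commutation $Ae^{(s+t)A}x=e^{tA}Ae^{sA}x$; the off-diagonal case rests on the pointwise smoothing bound $\|Ae^{\tau A}x\|\le C\tau^{\alpha-1}\|x\|_{D_A(\alpha,p)}$, which is exactly the embedding $D_A(\alpha,p)\subset D_A(\alpha,\infty)$, and your averaging over $s\in(\tau/2,\tau)$ proves it correctly (the exponents combine as $-1/p-(1-\alpha-1/p)=\alpha-1$); finally the substitution $s=tu$ works because $a+b+1=(\alpha-\beta)p$, so the powers of $t$ cancel and one is left with a Beta-type integral converging precisely when $\beta<1$ (origin) and $\alpha<\beta$ (infinity) --- and the endpoint that would break the integral version, $\beta=1$ with $p<\infty$, is exactly the case excluded by the hypothesis $(\beta,p)\in(0,1)\times[1,\infty]\cup\{(1,\infty)\}$, for which your supremum variant applies instead. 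Two points worth making explicit if you write this up: the averaging step needs $(\tau/2,\tau)\subset(0,1)$, which your restriction $\tau\le 1$ ensures before you treat $\tau\in(1,2]$ separately; and since the paper's sectoriality condition (\ref{oper.sett}) allows $\omega\neq 0$, the uniform bounds $\|e^{tA}\|_{L(X)}\le M_0$ and $\|tAe^{tA}\|_{L(X)}\le M_1$ hold on $(0,1]$ only up to a factor $e^{|\omega|}$, which is harmless here but should be said.
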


\begin{definition} Let $0 \leq \alpha \leq 1$ and let $D,X$ be Banach spaces, $D \subset X$. A Banach space $Y$
such that $D \subset Y \subset X$ is said to belong to the class $J_{\alpha}$ between
$X$ and $D$ if there is a constant $C$ such that $||x||_Y \leq C ||x||_X^{1- \alpha}||x||_D^{\alpha}, \quad
\forall x \in D.$ In this case we write $Y \in J_{\alpha}(X,D)$.
\end{definition}

Now we give the definition of solution to the BSDE:
\begin{equation}\label{pbS1}
Y_t -\int_t^T e^{(s-t)A}[f_0(s,Y_s)+f_1(s,Y_s,Z_s)]ds + \int_t^T
e^{(s-t)A}Z_sdW_s= e^{(T-t)A}\xi,
\end{equation}

 \begin{definition} A pair of progressively measurable  processes $(Y,Z)$ is called mild
 solution of (\ref{pbS1}) if it belongs to the space $L^2(\Omega;C([0, T];H_{\alpha}))\times L^2(\Omega \times
[0,T];\mathcal{L}^2(K,H))$ and $\P$-a.s.solves the integral
equation (\ref{pbS1}) on the interval $[0,T]$.
\end{definition}

We finally state a lemma needed in the sequel. It
is a generalization of the well known Gronwall's lemma. Its proof is given in the Appendix.
\begin{lemma}\label{Gronwall2} Assume $a,b,\alpha, \beta$ are nonnegative constants, with
$\alpha <1$, $\beta>0$ and $0<T <\infty$. For any nonnegative process $U \in L^1(\Omega
\times[0,T]) $, satisfying $\P$-a.s. $ U_t \leq a(T-t)^{-\alpha} +b \int_t^T (s-t)^{\beta - 1}\E^{\mathcal{F}_t}U_sds$
for almost every $t \in [0,T]$, it holds $\P$-a.s.
$ U_t \leq aM(T-t)^{- \alpha}, $
for almost every  $ t \in [0, T]$. $M$ is a constant depending only on $b, \alpha,
\beta,T$.
\end{lemma}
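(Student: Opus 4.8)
The plan is to remove the conditional expectation by a tower-property argument, thereby reducing the statement to a purely deterministic singular Gronwall inequality, and then to establish the latter by iteration, computing the iterated kernels via the Beta function. First I would fix $t\in[0,T]$ and, for $s\in[t,T]$, set $\psi_t(s)=\E^{\mathcal{F}_t}U_s$. Applying the hypothesis at time $s$ in place of $t$, taking $\E^{\mathcal{F}_t}$ of both sides (legitimate since $t\le s$), and using the tower property $\E^{\mathcal{F}_t}\E^{\mathcal{F}_s}=\E^{\mathcal{F}_t}$ together with the conditional Fubini theorem (valid because $U\ge 0$), one obtains that $\P$-a.s. the \emph{deterministic}-kernel inequality
\[
\psi_t(s)\le a(T-s)^{-\alpha}+b\int_s^T(r-s)^{\beta-1}\psi_t(r)\,dr,\qquad s\in[t,T],
\]
holds for a.e. $s$. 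Since $U\in L^1(\Omega\times[0,T])$, for $\P$-a.e. $\omega$ the function $s\mapsto\psi_t(s)$ is integrable on $[t,T]$, so it is an admissible input for a deterministic argument; and because $\psi_t(t)=U_t$, it will then suffice to bound $\psi_t$ at $s=t$.

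The core is the deterministic claim: if $v\ge0$ is integrable on $[t,T]$ and $v(s)\le a(T-s)^{-\alpha}+b\int_s^T(r-s)^{\beta-1}v(r)\,dr$, then $v(s)\le aM(T-s)^{-\alpha}$. I would prove it by iterating the inequality. Writing $\phi(s)=a(T-s)^{-\alpha}$ and $(Lg)(s)=b\int_s^T(r-s)^{\beta-1}g(r)\,dr$, the substitution $r=s+(T-s)u$ reduces each application of $L$ to a Beta integral $\int_0^1u^{\beta-1}(1-u)^{c}\,du=B(\beta,c+1)$, and an induction in which the Gamma quotients telescope gives the explicit iterate
\[
(L^k\phi)(s)=a\,\Gamma(1-\alpha)\,\frac{\big(b\,\Gamma(\beta)\big)^k}{\Gamma(k\beta+1-\alpha)}\,(T-s)^{k\beta-\alpha}.
\]
Summing over $k$ produces a two-parameter Mittag-Leffler series $E_{\beta,1-\alpha}(z)=\sum_{k\ge0}z^k/\Gamma(k\beta+1-\alpha)$, which is entire and hence convergent; bounding $(T-s)^{\beta}\le T^{\beta}$ yields $\sum_{k\ge0}(L^k\phi)(s)\le aM(T-s)^{-\alpha}$ with $M=\Gamma(1-\alpha)\,E_{\beta,1-\alpha}\big(b\Gamma(\beta)T^{\beta}\big)$, a constant depending only on $b,\alpha,\beta,T$.

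It remains to check that the remainder does not contribute. Iterating $n$ times gives $v\le\sum_{k=0}^{n-1}L^k\phi+L^n v$, and the $n$-fold convolution of the kernel (the semigroup property of the Riemann--Liouville integral) makes the remainder equal to $\frac{(b\Gamma(\beta))^n}{\Gamma(n\beta)}\int_s^T(r-s)^{n\beta-1}v(r)\,dr$; once $n\beta\ge1$ the factor $(r-s)^{n\beta-1}$ is bounded by $T^{n\beta-1}$, so this term is at most $\frac{(b\Gamma(\beta))^nT^{n\beta-1}}{\Gamma(n\beta)}\int_t^T v(r)\,dr$, which tends to $0$ as $n\to\infty$ because $\Gamma(n\beta)$ grows faster than any exponential. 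Hence $v(s)\le\sum_{k\ge0}(L^k\phi)(s)\le aM(T-s)^{-\alpha}$. Transferring this back to $\psi_t$ and evaluating at $s=t$ gives $U_t=\psi_t(t)\le aM(T-t)^{-\alpha}$ for a.e. $t$, $\P$-a.s., with the displayed $M$. I expect the main obstacle to be the very first step: making the passage through the conditional expectation fully rigorous, i.e. verifying that for each fixed $t$ the map $s\mapsto\E^{\mathcal{F}_t}U_s$ inherits the deterministic inequality $\P$-a.s., which demands care with the order of the quantifiers ``a.e. $t$'' and ``$\P$-a.s.'' and with the conditional Fubini theorem; once this reduction is secured and one notes that $\E^{\mathcal{F}_t}\int_t^T U_r\,dr$ is a.s. finite, the iteration and the Mittag-Leffler estimate are routine.
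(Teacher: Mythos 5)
Your argument is correct, and its engine is the same as the paper's: recursive substitution of the inequality into itself, with the tower property $\E^{\mathcal{F}_t}\E^{\mathcal{F}_s}=\E^{\mathcal{F}_t}$ (for $t\le s$) collapsing the nested conditional expectations, explicit computation of the iterated kernels, a remainder that vanishes because $\Gamma(n\beta)$ grows faster than any exponential, and summation of the resulting series. The differences are in organization and in completeness. The paper iterates directly on the stochastic inequality and carries out the computation only for $\beta=1$, where the iterated kernels are $b^k(r-t)^{k-1}/(k-1)!$ and the series sums to an exponential, yielding $M=1+be^{bT}T/(1-\alpha)$; for $\beta\neq 1$ it merely asserts that ``a similar proof can be given.'' You instead freeze $t$, pass to $\psi_t(s)=\E^{\mathcal{F}_t}U_s$, and reduce everything to a deterministic singular Gronwall inequality, which you then solve for general $\beta$: the Beta-function identity gives the Riemann--Liouville iterates $(L^k\phi)(s)=a\Gamma(1-\alpha)\,(b\Gamma(\beta))^k(T-s)^{k\beta-\alpha}/\Gamma(k\beta+1-\alpha)$, and the series is a Mittag--Leffler function. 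In effect, your write-up is the ``similar proof'' the paper waves at, done in full; what the deterministic reduction buys is that all the measure-theoretic care (a jointly measurable version of $(s,\omega)\mapsto\E^{\mathcal{F}_t}U_s$, the Fubini argument reconciling ``a.e.\ $s$'' with ``$\P$-a.s.'', a.s.\ integrability of $\psi_t$) is confined to a single step, after which no stochastic analysis is needed. You correctly flag that step as the delicate one.

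One small blemish: your last line uses $\psi_t(t)=U_t$, i.e.\ $\E^{\mathcal{F}_t}U_t=U_t$, which requires $U$ to be adapted --- an assumption the lemma does not make (though it holds in every application in the paper). This is harmless to fix without adaptedness: rather than evaluating the deterministic bound at the endpoint $s=t$, substitute $\psi_t(r)\le aM(T-r)^{-\alpha}$ back into the hypothesis itself, obtaining
\[
U_t\le a(T-t)^{-\alpha}+abM\int_t^T(r-t)^{\beta-1}(T-r)^{-\alpha}\,dr
\le a\left(1+bM\,B(\beta,1-\alpha)\,T^{\beta}\right)(T-t)^{-\alpha},
\]
which is the stated conclusion with a slightly larger constant still depending only on $b,\alpha,\beta,T$.
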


\section{A simplified equation}\label{2}
As a preparation for the study of (\ref{pb}), in this section we consider the following simplified
version of that equation:
\begin{equation} \label{e}Y_t -\int_t^T e^{(s-t)A}[f_0(s,Y_s)ds+f_1(s)]ds + \int_t^T e^{(s-t)A}Z_sdW_s=
e^{(T-t)A}\xi, 
  \end{equation}
for all $t \in
 [0,T]$.

We suppose that the following assumptions hold.

\begin{hypothesis}\label{H1}
\end{hypothesis}
\begin{description}

 \item [1.]
 $A :D(A)\subset H \rightarrow H$ is a sectorial operator. We also assume that $A$ is dissipative, i.e. it satisfies $<Ay,y>\leq  0, \forall y \in D(A)$;

 \item [2.] for some $0<  \alpha <1$ there exists a
Banach space $H_{\alpha}$ continuously embedded in $H$ and such that

(i)  $\quad D_A(\alpha,1)\subset H_{\alpha} \subset D_A(\alpha,\infty)$;

(ii) $\quad \mbox{the part of } A \mbox{ in }  H_{\alpha} \mbox{ is sectorial in } H_{\alpha}.$

\item [3.]the final condition $\xi$ is an $\mathcal{F}_T$-measurable random variable defined on $\Omega$ with values in the closure of $D(A)$ with respect to $H_{\alpha}$-norm. We denote this set $\overline{D(A)}^{H_{\alpha}}$. Moreover $\xi$ belongs to
$L^{\infty}(\Omega,\mathcal{F}_T,\P;H_{\alpha})$;

 \item [4.] $f_0:\Omega \times[0,T] \times H_{\alpha} \rightarrow H$
satisfies:
\begin{description}
\item{i)} $\{f_0(t,y)\}_{t\in [0,T]}$ is progressively measurable $\forall y
\in H_{\alpha}$;
  \item{ii)} there exist constants $S >0$, $1< \gamma< 1/ \alpha$ such that  $\P$-a.s.

  $\quad \quad \quad |f_0(t,y)|_H\leq  S(1+ ||y||_{H_{\alpha}}^{\gamma}) \quad t \in [0,T], y \in H_{\alpha};
$

 \item{iii)} for every $R>0$ there is $L_R>0 $ such that $\P$-a.s.

$\quad \quad \quad  |f_0(t,y_1)-f_0(t,y_2)|_H \leq L_R ||y_1-y_2||_{H_{\alpha}}$

for $t \in [0,T]$ and $y_i \in H_{\alpha} $ with $||y_i||_{H_{\alpha}} \leq R$;
    \item{iv)} there exists a number
    $\mu \in \mathbb{R}$ such that $\P\text{-a.s.}$, $ \forall t \in [0,T]$, $y_1,y_2 \in H_{\alpha}$,
\begin{equation} \label{mu.dis} \, <f_0(t,y_1)-f_0(t,y_2),y_1-y_2>_H \leq \mu |y_1-y_2|_H^2;
\end{equation}
\end{description}
\item [5.]$f_1: \Omega\times [0,T] \rightarrow H$ is progressively
measurable and for some constant $C>0$ it satisfies $\P$-a.s. $|f_1(t)|_H \leq C,$
for $t \in [0,T]$.
\end{description}

\begin{remark} \label{rem.mu} \rm We note that the pair $(Y,Z)$ solves the BSDE
(\ref{e}) with final condition $\xi$ and drift $f=f_0 + f_1$ if and only if the pair
$(\bar{Y},\bar{Z}):= (e^{\lambda t}Y_t,e^{\lambda t}Z_t)$ is a solution of the same
equation with final condition $e^{\lambda T} \xi$ and drift $f'(t,y):= f_0^{'}(t,y)+
f_1^{'}(t)$ where $ f_0^{'}(t,y)= e^{\lambda t}(f_0(t,e^{-\lambda t}y) - \lambda y)$, $f_1^{'}(t)= e^{\lambda t}f_1(t).$ If we choose $\mu=\lambda$, then
$f_0^{'}$ satisfies the same assumption as $f_0$, but with (\ref{mu.dis}) replaced by
$ <f_0(t,y_1)-f_0(t,y_2),y_1-y_2>_H \leq 0.$
If this last condition holds, then $f_0$ is called dissipative. Hence, without loss
of generality, we shall assume until the end that $f_0$ is dissipative, or equivalently
that $\mu=0$ in (\ref{mu.dis}).
\end{remark}

\subsection{A priori estimates}

 We prove a basic estimate for the solution in the norm of $H$.

\begin{proposition} \label{eH} Suppose that Hypothesis \ref{H1} holds;
if $(Y,Z) $ is a mild solution of (\ref{e}) on the interval $[a,T]$, $0 \leq  a \leq T$, then
there exists a constant $C_1$, which depends only on $||\xi||_{L^{\infty}( \Omega;H)}$
and on the constants S of 4.ii) and C of 5. such that
$\P$-a.s. $ \sup_{a \leq
t \leq T} ||Y_t||_H \leq C_1.$ In particular the constant $C_1$ is independent of $a$.
\end{proposition}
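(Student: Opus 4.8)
The plan is to derive the bound by applying It\^o's formula to $|Y_t|_H^2$ and exploiting the two dissipativity assumptions, with the polynomial growth of $f_0$ entering only through the value $f_0(s,0)$. First I would invoke Remark \ref{rem.mu} to assume $\mu=0$, so that $f_0$ is dissipative. The decisive observation is that dissipativity controls the only quantity that actually appears, namely $\langle f_0(s,y),y\rangle_H$, linearly in $|y|_H$ and independently of the growth exponent: taking $y_2=0$ in the dissipativity inequality and using $|f_0(s,0)|_H\le S$ (assumption 4.ii) at $y=0$) gives
\begin{equation*}
\langle f_0(s,y),y\rangle_H\le\langle f_0(s,0),y\rangle_H\le S|y|_H,\qquad y\in H_\alpha .
\end{equation*}
This is precisely the mechanism that converts the growth measured in $\|y\|_{H_\alpha}$ into a bound in the weaker norm $|\cdot|_H$.

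Treating $Y$ for the moment as a strong solution, I would write It\^o's formula for $|Y_t|_H^2$ on $[t,T]$, producing the terms $2\langle Y_s,AY_s\rangle_H\le0$ (dissipativity of $A$), $2\langle Y_s,f_0(s,Y_s)\rangle_H\le2S|Y_s|_H$, $2\langle Y_s,f_1(s)\rangle_H\le2C|Y_s|_H$, the nonpositive term $-\int_t^T\|Z_s\|^2\,ds$, and the stochastic integral $-2\int_t^T\langle Y_s,Z_s\,dW_s\rangle$. Taking $\E^{\mathcal{F}_t}$ to annihilate the martingale and discarding the nonpositive contributions yields
\begin{equation*}
|Y_t|_H^2\le\|\xi\|_{L^\infty(\Omega;H)}^2+2(S+C)\,\E^{\mathcal{F}_t}\!\int_t^T|Y_s|_H\,ds .
\end{equation*}
Estimating $2(S+C)|Y_s|_H\le(S+C)(1+|Y_s|_H^2)$ gives an inequality of the form $U_t\le a'+b\,\E^{\mathcal{F}_t}\int_t^T U_s\,ds$ for $U_t=|Y_t|_H^2$, with $a'=\|\xi\|_{L^\infty(\Omega;H)}^2+(S+C)T$ and $b=S+C$. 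This is exactly the hypothesis of Lemma \ref{Gronwall2} with $\alpha=0$ and $\beta=1$, whose conclusion gives $U_t\le a'M$ for a.e.\ $t$, with $M=M(b,T)$ not depending on $a$; the constant $C_1=(a'M)^{1/2}$ then depends only on $\|\xi\|_{L^\infty(\Omega;H)}$, $S$ and $C$ as claimed, and the pathwise continuity of $Y$ in $H_\alpha$ (hence in $H$) upgrades the a.e.\ bound to $\sup_{a\le t\le T}|Y_t|_H\le C_1$.

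The main obstacle is that $Y$ is only a mild solution, so $Y_s$ need not lie in $D(A)$ and the term $\langle Y_s,AY_s\rangle_H$ in the It\^o formula is not literally meaningful. I would make it rigorous by resolvent regularization: set $J_n=n(nI-A)^{-1}$, which is a contraction on $H$ because $A$ is dissipative, and apply $J_n$ to the mild equation. Since $J_n$ commutes with the semigroup and maps $H$ into $D(A)$, the process $Y^n=J_nY$ is a genuine strong solution driven by $Z^n=J_nZ$ with data $J_n\xi$ and forcing $J_n(f_0(\cdot,Y)+f_1)$, so It\^o applies to $|Y^n_t|_H^2$ and $\langle AY^n_s,Y^n_s\rangle_H\le0$ holds honestly. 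One then lets $n\to\infty$, using $J_n\to I$ strongly and $\|J_n\|_{\mathcal{L}(H)}\le1$, and localizes to handle the stochastic integral, which a priori is only a local martingale. The delicate point in this passage is the drift term $\langle J_nY_s,J_nf_0(s,Y_s)\rangle_H$: since only $\E\sup_s\|Y_s\|_{H_\alpha}^2<\infty$ is available while $|Y_s|_H\,|f_0(s,Y_s)|_H$ grows like $\|Y_s\|_{H_\alpha}^{1+\gamma}$ with $1+\gamma>2$, a naive dominated-convergence argument is not justified, and the limit must be taken pathwise after localization (or by first establishing the estimate for the regularized problem with constants independent of $n$). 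Once this limiting procedure is secured, the inequality for $|Y_t|_H^2$ holds for the mild solution and the argument concludes as above.
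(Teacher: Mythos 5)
Your proof is correct and takes essentially the same route as the paper's: reduction to $\mu=0$ via Remark \ref{rem.mu}, regularization by $J_n=n(nI-A)^{-1}$ so that It\^o's formula applies rigorously to $|J_nY_t|_H^2$, dissipativity of $A$ and of $f_0$ to discard the dangerous terms, conditional expectation, and Lemma \ref{Gronwall2} with $\alpha=0$, $\beta=1$, with the final a.e.-to-everywhere upgrade by path continuity. The only divergence is the limit $n\to\infty$, where your caveat about dominated convergence (and the localization you propose) is not actually needed: the paper passes to the limit pathwise, applying dominated convergence in $ds$ for fixed $\omega$ (where $\sup_s\|Y_s\|_{H_\alpha}$ is finite by path continuity, so no moment of order $1+\gamma$ is ever required), obtains convergence in probability of the stochastic integrals, and only \emph{after} the limit takes conditional expectations, the integral $\int_t^T\langle Y_s,Z_s\,dW_s\rangle_H$ being a true martingale because $\E\sup_t|Y_t|_H^2<\infty$ and $\E\int_a^T\|Z_s\|_{\mathcal{L}^2(K;H)}^2\,ds<\infty$.
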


\begin{proof}
\rm Let the pair $(Y,Z)\in L^2( \Omega, C([a,T];H_{\alpha}) \times
L^2(\Omega \times [a,T];\mathcal{L}^2(K;H))$ satisfy (\ref{e}).
Let us introduce the  operators $J_n=n(nI-A)^{-1}$,
$n>0$. We note that
the operators $AJ_n$ are the Yosida approximations
of $A$ and they are bounded. Moreover $|J_nx-x| \rightarrow 0$ as $n \rightarrow \infty$, for every $x \in H$.
We set
$Y_t^n=J_nY_t$, $Z_t^n=J_nZ_t$.
It is readily verified that $Y^n$ admits the It\^o differential

$dY_t^n= -AY_t^ndt -J_nf(t,Y_t)dt -J_nf_1(t)dt+Z_t^ndW_t, \mbox{and} \quad Y_T^n=J_n\xi.$

Applying the Ito formula to $|Y_t^n|_H^2$, using the dissipativity of $A$, we obtain
\begin{equation} \label{Jneq}
\begin{array}{l}
\dis{ |Y_t^n|_H^2 + \int_t^T||Z_s^n ||_{\mathcal{L}^2(K;H)}^2ds} \leq  \dis{|J_n\xi|_H^2+
 2 \int_t^T <J_nf_0(s,Y_s),Y_s^n>_Hds} +\\
 \quad \quad +  \dis{2 \int_t^T <J_nf_1(s),Y_s^n>_Hds-  2 \int_t^T<Y_s^n,Z_s^n dW_s>_H}.
\end{array}
\end{equation}
We note that
$\int_t^T <J_nf_0(s,Y_s)+J_nf_1(s),Y_s^n>_Hds \rightarrow \int_t^T <f_0(s,Y_s)+f_1(s),Y_s>_Hds$ by dominated convergence, as $n \rightarrow
\infty$.
Moreover by the
dominated convergence theorem we have $\int_t^T||(Z_s^n)^*Y_s^n
-Z_s^*Y_s||_K^2ds \rightarrow 0 \quad \P\mbox{-a.s.}
$ and it follows that  $\int_t^T<Y_s^n,Z_s^ndW_s>_H \rightarrow \int_t^T<Y_s,Z_sdW_s>_H$
in probability. If we let $n \rightarrow \infty$ in (\ref{Jneq}) we obtain

 $\dis{|Y_t|_H^2 + \int_t^T||Z_s ||_{\mathcal{L}^2(K;H)}^2ds } \leq  \dis{ |\xi|_H^2+
 2 \int_t^T <f_0(s,Y_s)+f_1(s),Y_s>_Hds} $

  $\quad \quad \quad \quad \quad \quad \quad \quad \quad \quad \quad  \quad\dis{-  2 \int_t^T<Y_s,Z_s dW_s>_H.}$

Recalling (\ref{mu.dis}), that we assume to hold with $\mu=0$, it follows that
$$
\begin{array}{l}
\dis{|Y_t|_H^2 +\int_t^T ||Z_s||_{{\mathcal{L}}^2(K,H)}^2} \leq \\
\quad  \quad \leq \dis{ |\xi|_H^2+
 2 \int_t^T <f_0(s,0),Y_s>_H + 2 \int_t^T <f_1(s),Y_s>_Hds +}\\
 \quad \quad \quad \dis{- 2 \int_t^T<Y_s,Z_s dW_s>_H} \\
\quad \quad \leq  \dis{|\xi|_H^2+
  \int_t^T |f(s,0)|_H^2ds +\int_t^T |f_1(s)|_H^2ds+ 2\int_t^T |Y_s|_H^2ds +}\\
  \quad\quad\quad \dis{-  2 \int_t^T<Y_s,Z_s
  dW_s>_H}.
 \end{array}
 $$

Now, since
$\sup_{0 \leq t \leq T}|f(t,0)|_H^2 \leq S^2$
and since the stochastic integral $  \int_{a}^t <Y_s,Z_s dW_s>_H$, $ t \in [a ,T]$ is a
martingale, if we take the conditional expectation given $ \mathcal{F}_t$ we have$$
\begin{array}{l}
\dis{|Y_t|_H^2 \leq \E^{\mathcal{F}_t}|\xi|_H^2 +
2\E^{\mathcal{F}_t}\int_t^T |Y_s|_H^2ds +}\\
\quad \quad \quad \quad\dis{+ \E^{\mathcal{F}_t} \int_t^T |f(s,0)|_H^2ds +
\E^{\mathcal{F}_t} \int_t^T |f_1(s)|_H^2ds}\\
\quad \quad \quad \dis{\leq |\xi|_{L^{\infty}(\Omega,H)}^2+ (S^2 +C^2)T+
2\int_t^T \E^{\mathcal{F}_t}|Y_s|_H^2ds}.\\
\end{array}
$$

Since $Y$ belongs to $L^2(\Omega;C([a,T];H_{\alpha}))$ and,
consequently, $ ||Y||_{H_{\alpha}}^2 \in L^1(\Omega \times
[0,T])$, we can apply Lemma \ref{Gronwall2} to $|Y|_H^2$ and
conclude that

$\quad \quad \quad \quad \quad |Y_t|_H^2 \leq (|\xi|_{L^{\infty}(\Omega,H)}^2 +
[S^2+C^2]T)(1+2Te^{2T} ).$ \end{proof}

Now we will show that the result of Proposition \ref{eH}, together with the growth
condition satisfied by $f_0$, yields an a priori estimate on the solution in the
$H_{\alpha}$-norm.

Let $0< \alpha<1$ and let $\gamma
>1$ be given by 4.ii). We fix $\theta = \alpha \gamma$ and consider the  Banach space $D_A(\theta,\infty)$ introduced in Definition \ref{DAap}. It is easy to check (see \cite{L2}) that, if we take $\theta \in (0,1), \theta >\alpha$,
then $H_{\alpha}$ contains $D_A(\theta,\infty)$ and belongs to the class
$J_{\alpha/\theta}$ between $D_A(\theta,\infty)$ and $H$, hence the following inequality
is satisfied:
\begin{equation}\label{Ja/theta}
  |x|_{H_{\alpha}} \leq c |x|_{D_A(\theta,\infty)}^{\frac{\alpha}{\theta}}|x|_H^{1-
  \frac{\alpha}{\theta}}, \quad x \in D_A(\theta,\infty).
\end{equation}

\begin{proposition} \label{eHa}Suppose that Hypothesis \ref{H1} is satisfied.
Let $(Y,Z)$ be a mild solution of (\ref{e}) in $ [a,T]$, $a \geq 0$ and assume that
there exists two constants $R>0$ and $K>0$, possibly depending on $a$, such that, $\P$-a.s.,
\begin{equation} \label{K} \sup_{t \in [a,T]}||Y_t||_{H_{\alpha}} \leq R, \quad \sup_{t \in [a,T]}|Y_t|_H \leq K.
\end{equation}

Then the following inequality holds $\P$-a.s.:
\begin{equation}\label{Hastima}
|Y_t|_{L^{\infty}(\Omega, D_A(\theta,\infty))} \leq C_2 \frac{1}{(T-t)^{\theta -
\alpha}},\quad a \leq t <T
\end{equation}
with $C_2$ depending on the operator $A$, $||\xi||_{L^{\infty}(\Omega, H_{\alpha})}$,
$\theta$, $\alpha$, $K $, $C$ of 5. and $S$ of 4.ii) of Hypothesis 2.
\end{proposition}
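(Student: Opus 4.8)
The plan is to start from the mild equation (\ref{e}) solved for $Y_t$,
$$Y_t = e^{(T-t)A}\xi + \int_t^T e^{(s-t)A}[f_0(s,Y_s)+f_1(s)]\,ds - \int_t^T e^{(s-t)A}Z_s\,dW_s,$$
and to eliminate the stochastic integral by conditioning. For each fixed $t$ the process $r\mapsto\int_t^r e^{(s-t)A}Z_s\,dW_s$ is a martingale on $[t,T]$ vanishing at $r=t$, so its $\mathcal{F}_t$-conditional expectation at $r=T$ is zero; since $Y_t$ is $\mathcal{F}_t$-measurable this gives
$$Y_t = \E^{\mathcal{F}_t}\!\big[e^{(T-t)A}\xi\big] + \E^{\mathcal{F}_t}\!\int_t^T e^{(s-t)A}[f_0(s,Y_s)+f_1(s)]\,ds.$$
Everything then reduces to estimating these two terms in the $D_A(\theta,\infty)$-norm, and the delicate issue of bounding a stochastic convolution in an $L^\infty(\Omega)$ norm disappears.

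First I would record the two smoothing estimates of Proposition \ref{2.2.9lun}: applied with $(\alpha,\infty)\to(\theta,\infty)$ (legitimate since $\theta=\alpha\gamma>\alpha$) it gives $|e^{\tau A}x|_{D_A(\theta,\infty)}\le C\tau^{-(\theta-\alpha)}|x|_{D_A(\alpha,\infty)}$, and with $(0,\infty)\to(\theta,\infty)$ it gives $|e^{\tau A}x|_{D_A(\theta,\infty)}\le C\tau^{-\theta}|x|_H$. Using the embedding $H_\alpha\subset D_A(\alpha,\infty)$ of Hypothesis \ref{H1}.2(i), Jensen's inequality for the conditional expectation of the convex norm, the growth bound 4.ii) together with $\|Y_s\|_{H_\alpha}\le R$ from (\ref{K}), and $|f_1|_H\le C$, I obtain a first, crude estimate
$$|Y_t|_{D_A(\theta,\infty)} \le C(T-t)^{-(\theta-\alpha)}\|\xi\|_{L^\infty(\Omega,H_\alpha)} + C\big[S(1+R^\gamma)+C\big]\int_t^T (s-t)^{-\theta}\,ds,$$
the integral converging since $\theta=\alpha\gamma<1$. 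As $(T-t)^{1-\theta}\le T^{1-\alpha}(T-t)^{-(\theta-\alpha)}$, the right-hand side is finite and of order $(T-t)^{-(\theta-\alpha)}$. The sole purpose of this step is qualitative: it shows $Y_t\in D_A(\theta,\infty)$ $\P$-a.s. and that $U_t:=|Y_t|_{D_A(\theta,\infty)}$ lies in $L^1(\Omega\times[0,T])$, the integrability hypothesis needed to invoke the Gronwall lemma; the bound itself is discarded because its constant depends on $R$.

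The decisive step is to replace the crude growth bound by an interpolation estimate linear in $U_s$, and here the choice $\theta=\alpha\gamma$ is exactly what is required: in (\ref{Ja/theta}) the exponent of the strong norm is $\alpha/\theta=1/\gamma$, whence
$$\|Y_s\|_{H_\alpha}^\gamma \le c^\gamma |Y_s|_{D_A(\theta,\infty)}^{\gamma\alpha/\theta}|Y_s|_H^{\gamma(1-\alpha/\theta)} = c^\gamma |Y_s|_{D_A(\theta,\infty)}\,|Y_s|_H^{\gamma-1} \le c^\gamma K^{\gamma-1}\,U_s,$$
using $|Y_s|_H\le K$ from (\ref{K}). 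Feeding this into the convolution estimate and retaining the conditional expectation yields an inequality
$$U_t \le a(T-t)^{-(\theta-\alpha)} + b\int_t^T (s-t)^{-\theta}\,\E^{\mathcal{F}_t}U_s\,ds,$$
with $a$ depending only on $\|\xi\|_{L^\infty(\Omega,H_\alpha)}$, $S$, $C$, $A$, $\theta$, $\alpha$, $T$ and $b$ only on $S$, $K$, $A$, $c$, but crucially \emph{not} on $R$. This is precisely the hypothesis of Lemma \ref{Gronwall2} with admissible parameters $\theta-\alpha=\alpha(\gamma-1)<1$ and $\beta=1-\theta>0$, whose conclusion gives $U_t\le aM(T-t)^{-(\theta-\alpha)}$ with $M=M(b,\theta,\alpha,T)$; setting $C_2=aM$ finishes the proof. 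I expect the main obstacle to be the bookkeeping that makes the constant $R$-free: one must first spend the rough bound solely to secure the integrability of $U$, and only afterwards linearize through the tuned interpolation exponent. A minor technical point is extending the estimates of Proposition \ref{2.2.9lun}, stated for $0<\tau\le1$, to the whole interval, which is harmless because the contraction property of $e^{\tau A}$ controls the large-time regime while the singular factor $(T-t)^{-(\theta-\alpha)}$ matters only as $t\to T$.
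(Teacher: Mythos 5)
Your proof is correct and follows essentially the same route as the paper's: conditioning on $\mathcal{F}_t$ to remove the stochastic integral, the crude $R$-dependent bound used only to secure $\|Y\|_{D_A(\theta,\infty)} \in L^1(\Omega\times[a,T])$ for the Gronwall lemma, the interpolation inequality (\ref{Ja/theta}) with $\theta=\alpha\gamma$ to linearize in $\|Y_s\|_{D_A(\theta,\infty)}$ with the $H$-factor controlled by $K$, and finally Lemma \ref{Gronwall2}. You even state the exponent $|Y_s|_H^{\gamma-1}$ correctly (the paper's $\gamma(1-\alpha)/\theta$ is a typo) and flag the harmless extension of Proposition \ref{2.2.9lun} beyond $\tau\le 1$, which the paper leaves implicit.
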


\begin{proof} \rm Taking the conditional expectation given $\mathcal{F}_t$ in equation
(\ref{e}) we find

$\quad \quad Y_t = \E^{\mathcal{F}_t} \big(e^{(T-t)A}\xi + \int_t^T
e^{(s-t)A}[f_0(s,Y_s)+f_1(s)]ds\big), \quad a \leq t \leq T.$

Consequently, we have
\begin{equation}\label{stimaInfty}
\begin{array}{l}
\dis{||Y_t||_{D_A(\theta, \infty)} \leq \E^{\mathcal{F}_t}
||e^{(T-t)A}\xi||_{D_A(\theta, \infty)} }\\
 \quad \quad \quad \quad \quad \dis{+ \E^{\mathcal{F}_t}\int_t^T
||e^{(s-t)A}[f_0(s,Y_s)+f_1(s)]||_{D_A(\theta, \infty)}ds,\quad a \leq t \leq
T}.\\
\end{array}
\end{equation}

Since $H_{\alpha} \subset
D_A(\alpha, \infty)$, we have
\begin{equation}\begin{array}{l}\label{1.term}
 \dis{\E^{\mathcal{F}_t}||e^{(T-t)A} \xi||_{D_A(\theta, \infty)}} \leq \\
  \quad \leq \dis{\E^{\mathcal{F}_t} ||e^{(T-t)A}||_{L(D_A(\alpha,\infty),D_A(\theta,\infty))}
||\xi||_{L^{\infty}(\Omega,D_A(\alpha,\infty))}} \\
   \quad \leq  \dis{\frac{C_0}{(T-t)^{\theta -
  \alpha}}||\xi||_{L^{\infty}(\Omega,H_{\alpha})}},
\end{array}
\end{equation}
with $C_0= C_0( \alpha, \theta, \infty)$, where in the last inequality we use
Proposition \ref{2.2.9lun}.
Moreover
$$
\begin{array}{l}
\dis{\E^{\mathcal{F}_t}\int_t^T
||e^{(s-t)A}[f_0(s,Y_s)+f_1(s)]||_{D_A(\theta,
\infty)}ds } \leq \\
\leq \dis{ \E^{\mathcal{F}_t}\int_t^T ||e^{(s-t)A}||_{L(H,D_A(\theta,
\infty))}|f_0(s,Y_s)+f_1(s)|_H ds}\leq \\
\quad  \leq \dis{\E^{\mathcal{F}_t} \big(\int_t^T
\frac{C_1}{(s-t)^{\theta}}[|f_0(s,Y_s)|_H + |f_1(s)|_H ] ds\big)} \leq \\
 \quad  \leq
 \dis{\E^{\mathcal{F}_t}\int_t^T
\frac{C_1}{(s-t)^{\theta}}[ S(1+
||Y_s||_{H_{\alpha}}^{\gamma}) +C ]ds} .\\
\end{array}
$$
In the inequality we used Hypotheses 4.ii) and 5. and Proposition \ref{2.2.9lun}.
Recalling
(\ref{Ja/theta}), we conclude that the last term is
dominated by
$$ \begin{array}{l}
  \quad \quad
  \dis{\E^{\mathcal{F}_t}\int_t^T
\frac{C_1}{(s-t)^{\theta}}\left[S(1+ c|Y_s|_H^{\gamma(1- \alpha) /
\theta}||Y_s||_{D_A(\theta, \infty)}^{\gamma \alpha / \theta}) +C \right]ds} =\\
 \quad \quad = \dis{\E^{\mathcal{F}_t}\int_t^T \frac{C_1}{(s-t)^{\theta}}\left[S(1+
c|Y_s|_H^{\gamma(1- \alpha) / \theta}||Y_s||_{D_A(\theta, \infty)}) +C \right]ds,} \\
\end{array}
$$
by choosing $\theta= \alpha \gamma$. By the second inequality in (\ref{K}) this can be estimated by
 \begin{equation}\label{**}
 \begin{array}{l}
 \quad \quad   \dis{\int_t^T \frac{C_1}{(s-t)^{\theta}}S(1+ cK^{\gamma(1- \alpha) /
\theta}\E^{\mathcal{F}_t}||Y_s||_{D_A(\theta, \infty)} + C)ds} \\
 \quad  \leq \dis{ \int_t^T
  \frac{C_1}{(s-t)^{\theta}}(C+S)ds
  + \int_t^T \frac{C_1}{(s-t)^{\theta}}Sc K^{\gamma(1-
\alpha) / \theta}\E^{\mathcal{F}_t}||Y_s||_{D_A(\theta, \infty)}}ds.\\
\end{array}
\end{equation}

Hence by (\ref{1.term}) and (\ref{**}) it follows
$$
\begin{array}{l}
\dis{||Y_t||_{D_A(\theta,\infty)} \leq  \frac{C_0}{(T-t)^{\theta -
\alpha}}||\xi||_{L^{\infty}(\Omega,H_{\alpha})}
     + \int_t^T
  \frac{C_1}{(s-t)^{\theta}}(C+S)ds}\\
  \quad \quad   \quad \quad \quad  \dis{+ \int_t^T \frac{C_1}{(s-t)^{\theta}}Sc K^{\gamma(1-
\alpha) / \theta}\E^{\mathcal{F}_t}||Y_s||_{D_A(\theta, \infty)}ds},\\
\end{array}
$$
and (\ref{Hastima}) follows from Lemma \ref{Gronwall2}.
In order to justify the application of Lemma \ref{Gronwall2}, we need to prove that
$||Y||_{D_A(\theta, \infty)}$ belongs to $L^1(\Omega \times [a,T])$.
This also follows from(\ref{1.term}) and (\ref{**}) since, for some constant $K_1$,
$$
\begin{array}{l}
\dis{||Y_t||_{D_A(\theta,\infty)}} \leq \\
\quad \leq \dis{ \frac{K_1}{(T-t)^{\theta -
\alpha}}||\xi||_{L^{\infty}(\Omega,H_{\alpha})} + \E^{\mathcal{F}_t}[ \sup_{s
\in[a,T]}(1+||Y_s||_{H_{\alpha}}^{\gamma})
  \int_t^T \frac{ds}{(s-t)^{\theta}}]}\\
 \quad \leq \dis{\frac{K_1}{(T-t)^{\theta -
\alpha}}||\xi||_{L^{\infty}(\Omega,H_{\alpha})} +  (1+R^{\gamma})
  \int_t^T \frac{ds}{(s-t)^{\theta}}}.\\
\end{array}
$$
\end{proof}

\subsection{Local existence and uniqueness}

We prove that, under Hypothesis \ref{H1}, there exists a
unique solution of (\ref{e}) on an interval $[T-\delta,T]$ with $\delta$
sufficiently small.

To treat the ordinary integral in the left hand side of (\ref{e}), we need the
following result, whose proof can be found in \cite{L2}, Proposition 4.2.1 and Lemma 7.1.1.

\begin{lemma}\label{phi}

Let $\phi \in L^{\infty}((a,T);H),\quad 0<a<T$ and set
$$v(t)= \int_t^T e^{(s-t)A} \phi(s)ds, \quad a \leq t \leq T.$$
If $0<\alpha<1$, then  $v \in C^{1- \alpha}([a,T];D_A(\alpha,1))$ and there is $G_0 >0$,
not depending on $a$, such that
$$||v||_{C^{1- \alpha}([a,T];D_A(\alpha,1))}\leq G_0||\phi||_{
L^{\infty}((a,T);H)}.$$ Since $D_A(\alpha,1)\subset H_{\alpha}$, we also have $v \in C^{1- \alpha}([a,T];H_{\alpha})$ and there is $G>0$, not depending on $a$, such that
$$||v||_{C^{1- \alpha}([a,T];H_{\alpha})}\leq G||\phi||_{ L^{\infty}((a,T);H)}.$$
\end{lemma}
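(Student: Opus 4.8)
The plan is to obtain both estimates directly from the elementary analytic-semigroup bounds $|e^{\sigma A}|_{L(H)}\le M_0$, $|Ae^{\sigma A}|_{L(H)}\le M_1\sigma^{-1}$ and $|A^2e^{\sigma A}|_{L(H)}\le M_2\sigma^{-2}$, valid for $0<\sigma\le T$ with constants depending only on $A$ and $T$ (not on $a$), since $A$ is sectorial and dissipative; see \cite{L2}. Writing $v(t)=\int_0^{T-t}e^{\sigma A}\phi(t+\sigma)\,d\sigma$ after the change of variable $\sigma=s-t$, and recalling from Definition \ref{DAap} that $\|x\|_{D_A(\alpha,1)}=|x|_H+\int_0^1\tau^{-\alpha}|Ae^{\tau A}x|_H\,d\tau$, I would reduce everything to one scalar inequality: for $0<\delta\le T$,
$$\int_0^1\tau^{-\alpha}\log\frac{\tau+\delta}{\tau}\,d\tau\le C_\alpha\,\delta^{1-\alpha},$$
which follows from the rescaling $\tau=\delta u$, turning the left side into $\delta^{1-\alpha}\int_0^{1/\delta}u^{-\alpha}\log\frac{u+1}{u}\,du$ and bounding the integral by the convergent constant $\int_0^\infty u^{-\alpha}\log\frac{u+1}{u}\,du$ (finite precisely because $0<\alpha<1$).

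First I would prove the uniform pointwise bound $\sup_t\|v(t)\|_{D_A(\alpha,1)}\le c\|\phi\|_{L^\infty((a,T);H)}$. The $H$-part is trivial, $|v(t)|_H\le M_0T\|\phi\|_{L^\infty((a,T);H)}$. For the seminorm, differentiating under the integral gives $Ae^{\tau A}v(t)=\int_0^{T-t}Ae^{(\tau+\sigma)A}\phi(t+\sigma)\,d\sigma$, hence $|Ae^{\tau A}v(t)|_H\le M_1\|\phi\|_{L^\infty((a,T);H)}\log\frac{\tau+(T-t)}{\tau}$; integrating against $\tau^{-\alpha}$ and using the displayed inequality with $\delta=T-t$ bounds the seminorm by $M_1C_\alpha T^{1-\alpha}\|\phi\|_{L^\infty((a,T);H)}$, uniformly in $t$.

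Next, for the Hölder part I would fix $a\le t<t'\le T$, set $\delta=t'-t$, and split
$$v(t)-v(t')=\int_t^{t'}e^{(s-t)A}\phi(s)\,ds+\int_{t'}^T\big(e^{(s-t)A}-e^{(s-t')A}\big)\phi(s)\,ds=:I_1+I_2,$$
estimating each in $D_A(\alpha,1)$ by $c\|\phi\|_{L^\infty((a,T);H)}\delta^{1-\alpha}$. For $I_1$ the same computation as above, now over $(0,\delta)$, yields the seminorm bound $M_1C_\alpha\|\phi\|_{L^\infty((a,T);H)}\delta^{1-\alpha}$ directly from the displayed inequality, while $|I_1|_H\le M_0\delta\|\phi\|_{L^\infty((a,T);H)}\le M_0T^\alpha\delta^{1-\alpha}\|\phi\|_{L^\infty((a,T);H)}$. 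For $I_2$ I would use $e^{(s-t)A}-e^{(s-t')A}=\int_t^{t'}Ae^{(s-r)A}\,dr$, so that $Ae^{\tau A}I_2=\int_{t'}^T\!\!\int_t^{t'}A^2e^{(\tau+s-r)A}\phi(s)\,dr\,ds$; carrying out the two inner integrations explicitly and using $|A^2e^{(\cdot)A}|_{L(H)}$ gives $|Ae^{\tau A}I_2|_H\le M_2\|\phi\|_{L^\infty((a,T);H)}\log\frac{\tau+\delta}{\tau}$ (the extra logarithmic term produced being nonpositive), so again the displayed inequality controls its seminorm by $\delta^{1-\alpha}$; the $H$-norm of $I_2$, estimated with $|Ae^{(\cdot)A}|_{L(H)}$, produces a factor $\delta\log(1/\delta)$ that is likewise $\le c\,\delta^{1-\alpha}$ on $(0,T]$.

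The first estimate then follows with $G_0$ depending only on $\alpha,T,M_0,M_1,M_2$, and in particular not on $a$. The second estimate is immediate: Hypothesis \ref{H1}.2(i) gives the continuous embedding $D_A(\alpha,1)\subset H_\alpha$, so $\|w\|_{H_\alpha}\le c\,\|w\|_{D_A(\alpha,1)}$ for all $w$, and applying this to $v(t)$ and to $v(t)-v(t')$ yields $\|v\|_{C^{1-\alpha}([a,T];H_\alpha)}\le G\|\phi\|_{L^\infty((a,T);H)}$ with $G=cG_0$. I expect the only genuinely delicate point to be the $I_2$ term: one must carry out its inner integrations carefully so that the seminorm collapses to the single logarithmic integral and does not acquire an extra logarithmic factor that would destroy the sharp Hölder exponent $1-\alpha$; checking thereafter that every constant is independent of $a$ is routine, since all bounds involve only $T$, $\alpha$ and the semigroup constants.
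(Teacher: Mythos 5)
Your proof is correct, but it takes a different route from the paper in a trivial sense: the paper gives no proof of Lemma \ref{phi} at all, it simply invokes \cite{L2} (Proposition 4.2.1 and Lemma 7.1.1), where the corresponding statement is proved for the forward convolution $\int_0^t e^{(t-s)A}f(s)\,ds$, the backward version following by the time reversal $t\mapsto T-t$. Your argument is a legitimate self-contained substitute, and it is in essence the standard argument behind that citation: reduction of the $D_A(\alpha,1)$-seminorm to the scalar inequality $\int_0^1\tau^{-\alpha}\log\frac{\tau+\delta}{\tau}\,d\tau\le C_\alpha\delta^{1-\alpha}$, the splitting $v(t)-v(t')=I_1+I_2$, and the representation $e^{(s-t)A}-e^{(s-t')A}=\int_t^{t'}Ae^{(s-r)A}\,dr$ combined with the bounds $\|A^k e^{\sigma A}\|_{L(H)}\le M_k\sigma^{-k}$. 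The point you flagged as delicate does work out: carrying out the two inner integrations gives $|Ae^{\tau A}I_2|_H\le M_2\|\phi\|_{L^\infty((a,T);H)}\bigl(\log\frac{\tau+\delta}{\tau}+\log\frac{\tau+T-t'}{\tau+T-t}\bigr)$, and the second logarithm is indeed nonpositive since $T-t'<T-t$, so the seminorm of $I_2$ retains the sharp order $\delta^{1-\alpha}$; likewise the $\delta\log(1/\delta)$ factor in $|I_2|_H$ is $O(\delta^{1-\alpha})$ on $(0,T]$. What your route buys is that the independence of $G_0$ from $a$ --- which the paper states and genuinely needs in the global-existence argument --- is visible by inspection, since every constant depends only on $\alpha$, $T$ and the semigroup constants (with the cosmetic caveat that the bounds on $\|A^ke^{\sigma A}\|$ must be taken on $(0,T+1]$, as $\tau+\sigma$ can exceed $T$); what the citation buys is brevity. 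Two steps you should make explicit to be complete: (i) moving $A$ under the integral sign uses the closedness of $A$ together with the integrability of the resulting pointwise bound; (ii) the passage from $D_A(\alpha,1)$ to $H_\alpha$ requires the inclusion $D_A(\alpha,1)\subset H_\alpha$ of Hypothesis \ref{H1} to be a \emph{continuous} embedding, which holds either by reading the hypothesis that way or by the closed graph theorem, both spaces being continuously embedded in $H$.
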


\begin{theorem} \label{el} Let us assume that Hypothesis \ref{H1} holds, except possibly 4.iv).
Then there exists $\delta>0$ such that the equation (\ref{e}) has a unique local mild solution
$(Y,Z) \in L^2(\Omega;C([T - \delta,T];H_{\alpha})) \times L^2(\Omega \times [T- \delta,
T];{\mathcal{L}}^2(K;H)) $.
\end{theorem}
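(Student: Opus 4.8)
The plan is to solve (\ref{e}) on a short interval $[T-\delta,T]$ by a Banach fixed point argument, exploiting the fact that on a bounded subset of $H_\alpha$ the coefficient $f_0$ is bounded and Lipschitz (by 4.ii) and 4.iii)). Set $M_\alpha:=\sup_{0\le t\le T}\|e^{tA}\|_{L(H_\alpha)}$, which is finite since, by Hypothesis \ref{H1}.2(ii), the part of $A$ in $H_\alpha$ is sectorial and hence generates an analytic semigroup on $H_\alpha$. Fix $R:=M_\alpha\|\xi\|_{L^\infty(\Omega;H_\alpha)}+1$ and, for $\delta\in(0,1]$ to be chosen, let $\mathcal B$ be the set of $U\in L^2(\Omega;C([T-\delta,T];H_\alpha))$ with $\sup_t\|U_t\|_{H_\alpha}\le R$ $\P$-a.s.; this is a closed subset of a complete space, hence complete for the distance induced by $\|U\|^2=\E\sup_t\|U_t\|_{H_\alpha}^2$. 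Given $U\in\mathcal B$, the frozen drift $g^U(s):=f_0(s,U_s)+f_1(s)$ is progressively measurable and, by 4.ii) and 5., satisfies $|g^U(s)|_H\le S(1+R^\gamma)+C=:\Lambda$ $\P$-a.s. Since $g^U$ does not involve the unknowns, it is trivially Lipschitz in $(y,z)$, so the linear version of (\ref{e}) with drift $g^U$ has, by Hu and Peng \cite{HP}, a unique mild solution $(Y,Z)\in L^2(\Omega;C([T-\delta,T];H))\times L^2(\Omega\times[T-\delta,T];\mathcal L^2(K;H))$; define $\Gamma(U):=Y$.

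First I would check that $\Gamma$ actually takes values in $L^2(\Omega;C([T-\delta,T];H_\alpha))$ and leaves $\mathcal B$ invariant. Taking $\E^{\mathcal F_t}$ in (\ref{e}) annihilates the stochastic integral, because its integrand $e^{(s-t)A}Z_s$ is adapted on $[t,T]$, so $Y_t=\E^{\mathcal F_t}[\eta_t]$ with $\eta_t:=e^{(T-t)A}\xi+\int_t^Te^{(s-t)A}g^U(s)\,ds$. The map $t\mapsto\eta_t$ is continuous into $H_\alpha$: the Bochner integral lies in $C^{1-\alpha}([T-\delta,T];H_\alpha)$ by Lemma \ref{phi} (applied pathwise, as $g^U(\cdot,\omega)\in L^\infty$), while $t\mapsto e^{(T-t)A}\xi$ is continuous by analyticity of the semigroup on $H_\alpha$, the continuity up to $t=T$ being exactly what Hypothesis \ref{H1}.3 ($\xi\in\overline{D(A)}^{H_\alpha}$) guarantees; continuity of $Y_t=\E^{\mathcal F_t}[\eta_t]$ then follows by combining this with the path-continuity of conditional expectations for the Brownian filtration. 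For the size, $\|\E^{\mathcal F_t}[e^{(T-t)A}\xi]\|_{H_\alpha}\le M_\alpha\|\xi\|_{L^\infty(\Omega;H_\alpha)}$, and the smoothing bound $\|e^{rA}\|_{L(H,H_\alpha)}\le C r^{-\alpha}$ (from $D_A(\alpha,1)\subset H_\alpha$ and Proposition \ref{2.2.9lun}) gives $\big\|\int_t^Te^{(s-t)A}g^U(s)\,ds\big\|_{H_\alpha}\le C\Lambda\,\delta^{1-\alpha}/(1-\alpha)$. Hence $\sup_t\|Y_t\|_{H_\alpha}\le M_\alpha\|\xi\|_{L^\infty(\Omega;H_\alpha)}+C\Lambda\,\delta^{1-\alpha}/(1-\alpha)$, which is $\le R$ once $\delta$ is small enough, so $\Gamma(\mathcal B)\subseteq\mathcal B$.

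Next I would establish the contraction. For $U^1,U^2\in\mathcal B$ the difference $Y^1-Y^2=\Gamma(U^1)-\Gamma(U^2)$ solves (\ref{e}) with zero terminal datum and drift $f_0(\cdot,U^1)-f_0(\cdot,U^2)$, so $Y^1_t-Y^2_t=\E^{\mathcal F_t}\int_t^Te^{(s-t)A}[f_0(s,U^1_s)-f_0(s,U^2_s)]\,ds$. Since both inputs stay in the ball of radius $R$, the local Lipschitz estimate 4.iii) gives $|f_0(s,U^1_s)-f_0(s,U^2_s)|_H\le L_R\|U^1_s-U^2_s\|_{H_\alpha}$, and the same smoothing bound yields $\|Y^1_t-Y^2_t\|_{H_\alpha}\le C L_R\,\delta^{1-\alpha}/(1-\alpha)\,\E^{\mathcal F_t}\big[\sup_s\|U^1_s-U^2_s\|_{H_\alpha}\big]$. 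Taking the supremum in $t$, squaring and invoking Doob's maximal inequality for the martingale $t\mapsto\E^{\mathcal F_t}[\sup_s\|U^1_s-U^2_s\|_{H_\alpha}]$ gives $\|\Gamma(U^1)-\Gamma(U^2)\|\le 2C L_R\,\delta^{1-\alpha}/(1-\alpha)\,\|U^1-U^2\|$. Shrinking $\delta$ (depending on $R$, hence on $\|\xi\|_{L^\infty(\Omega;H_\alpha)}$, and on $L_R$) makes $\Gamma$ a strict contraction, so it has a unique fixed point $Y\in\mathcal B$; the associated $Z$ from the linear equation completes the mild solution. Uniqueness in $\mathcal B$ is immediate, and for an arbitrary mild solution it follows by localising near $t=T$, where any $H_\alpha$-continuous solution with $Y_T=\xi$ and $\|\xi\|_{H_\alpha}<R$ necessarily lies in $\mathcal B$, and then applying the same estimate.

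The step I expect to be most delicate is the well-definedness in the second paragraph: verifying that the frozen-drift linear solution genuinely has $H_\alpha$-continuous trajectories, rather than merely the $H$-continuous ones delivered by \cite{HP}. The conditional-expectation representation $Y_t=\E^{\mathcal F_t}[\eta_t]$ is what makes this manageable, since it removes the stochastic convolution (whose regularity in the stronger norm $H_\alpha$ would otherwise require a separate factorisation argument) and reduces everything to the regularity of a deterministic parabolic convolution (Lemma \ref{phi}), the analytic smoothing of the terminal term, and the continuity of conditional expectations; the behaviour at the endpoint $t=T$ is precisely where Hypothesis \ref{H1}.3 enters. Note that none of this uses the dissipativity 4.iv), consistently with its exclusion from the statement.
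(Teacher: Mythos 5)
Your proposal is correct and follows essentially the same route as the paper: a Banach fixed point argument on the a.s.-bounded ball in $L^2(\Omega;C([T-\delta,T];H_\alpha))$, using Hu--Peng for the frozen-drift linear equation, the conditional-expectation representation of $Y$, the smoothing estimate of Proposition \ref{2.2.9lun} combined with $D_A(\alpha,1)\subset H_\alpha$ for the invariance bound, Doob's inequality for the contraction, and Hypothesis \ref{H1}.3 together with Lemma \ref{phi} for $H_\alpha$-continuity. The only (cosmetic) deviation is that you derive the contraction constant from the direct bound $\|e^{rA}\|_{L(H,H_\alpha)}\le Cr^{-\alpha}$ rather than from the H\"older norm estimate of Lemma \ref{phi} as the paper does; both yield the same $\delta^{1-\alpha}$ factor.
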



 \begin{remark} \rm The
dissipativity condition 4.iv) only plays a role in obtaining the a priori
estimate in $H$ (Proposition \ref{eH}) and consequently global
existence, as we will see later.
\end{remark}
\smallskip

\begin{proof}\rm Let $M_{\alpha}:= \sup_{0 \leq
t \leq T}||e^{
tA}||_{L(H_{\alpha})}$.  We fix a positive number
$R$ such that $R\geq
2M_{\alpha}||\xi||_{L^{\infty}(\Omega;H_{\alpha})}$. This implies that $
\sup_{0 \leq t \leq T} ||e^{tA}\xi||_{H_{\alpha}} \leq R/2$ $\P$-a.s.
Moreover, let $L_R$ be such that
$$ |f_0(t,y_1)-f_0(t,y_2)|_H \leq L_R ||y_1-y_2||_{H_{\alpha}}  \quad 0 \leq t\leq T, \quad||y_i||_{H_{\alpha}} \leq R$$

We recall that the space $L^2(\Omega;C([T -
\delta,T];H_{\alpha}))$ is a Banach space endowed with the norm
$Y \rightarrow \big( \E \sup_{t \in [T - \delta, T]}
  ||Y_t||_{H_{\alpha}}^2\big)^{1/2}$. We define
$$\mathbb{K}= \{Y \in L^2 (\Omega; C([T- \delta,T],H_{\alpha})):
 \sup_{t \in [T -\delta,T]}||Y_t||_{H_{\alpha}} \leq R,\quad a.s.\}.$$

It easy to check that $\K$ is a closed subset of $L^2(\Omega; C([T-
\delta,T],H_{\alpha}))$, hence a complete metric space (with the inherited metrics).
We look for a local mild solution $(Y,Z)$ in the space $\mathbb{K}$.
We define a nonlinear operator $\Gamma:\mathbb{K}\rightarrow \mathbb{K}$ as follows:
given $U \in \mathbb{K}$, $Y=\Gamma(U)$ is the first component of the mild solution
$(Y,Z)$ of the equation
\begin{equation} \label{U} Y_t -\int_t^T
e^{(s-t)A}[f_0(s,U_s)ds+f_1(s)]ds + \int_t^T e^{(s-t)A}Z_sdW_s= e^{(T-t)A}\xi
\end{equation}
for $t \in [T - \delta, T]$.
Since $U \in \mathbb{K}$ we have $\P$-a.s.
\begin{equation}\label{fo.f1.limitate}
|f_0(t,U_t) +f_1(t)|_H \leq S(1+ ||U_t||_{H_{\alpha}}^{\gamma}) + C \leq S(1+
R^{\gamma})+ C,
\end{equation}
for all $t$ in $[T - \delta, T]$. Hence $f_0(\cdot,U_{\cdot})
+f_1(\cdot)$ belongs to $L^2(\Omega \times [T - \delta, T];H)$
and, by a result of Hu and Peng \cite{HP}, there exists a unique
pair $(Y,Z) \in L^2(\Omega\times [T- \delta,T];H) \times
L^2(\Omega \times[T- \delta,T];{\mathcal{L}}^2(K;H))$ satisfying
(\ref{U}). Moreover, by taking the conditional expectation given
$\mathcal{F}_t$, $Y$ has the following representation
$$Y_t= {\E}^{\mathcal{F}_t}\big( e^{(T-t)A}\xi + \int_t^T e^{(s-t)A}[f_0(s,U_s)+f_1(s)] ds \big).$$
We will show that $\Gamma$ is a contraction for the norm of $L^2
(\Omega, C([T- \delta,T];H_{\alpha})$ and maps $\mathbb{K}$ into
itself, if $\delta $ is sufficiently small; clearly, its unique
fixed point is the required solution of the BSDE.

We first check the contraction property. Let $U^1,U^2 \in
\mathbb{K}$. Then
$$
\begin{array}{l}
\dis{\Gamma(U^1)_t -\Gamma(U^2)_t=Y_t^1-Y_t^2= {\E}^{\mathcal{F}_t}\big[\int_t^T
e^{(s-t)A}(f_0(s,U_s^1) -f_0(s,U_s^2)\big) ds \big].}\\
\end{array}$$ Let $v(t)= \int_t^T
e^{(s-t)A}\big( f_0(s,U_s^1)-f_0(s,U_s^2)\big)ds$. Then, noting that $v(T)=0$ and
recalling Lemma \ref{phi}, for $t \in [T- \delta,T]$
$$
\begin{array}{lll}
  \dis{||Y_t^1-Y_t^2||_{H_{\alpha}}  =} \\
 = \dis{||{\E}^{\mathcal{F}_t}
  v(t)||_{H_{\alpha}}}
  & \leq & \dis{ {\E}^{\mathcal{F}_t} ||v(t)||_{H_{\alpha}}} \\
 & \leq & \dis{{\delta}^{1 - \alpha}{\E}^{\mathcal{F}_t} ||v||_{C^{(1- \alpha)}([T -
  \delta,T],H_{\alpha})}} \\
  & \leq &
 \dis{G{\delta}^{(1 - \alpha)}\E^{\mathcal{F}_t}||f_0(\cdot,U_{\cdot}^1)-f_0(\cdot,U_{\cdot}^2) ||_{L^{\infty}([T -
 \delta,T],H)}}\\
 & \leq & \dis{G{\delta}^{(1- \alpha)} L_R \E^{\mathcal{F}_t}\sup_{t \in  [T -
 \delta,T]}||U_t^1-U_t^2||_{H_{\alpha}} =: M_t},\\
\end{array}
$$
where $\{M_t, \, t \in[T- \delta,T]\}$ is a martingale. Hence, by Doob's inequality
$$
\begin{array}{lll}
\dis{\E \sup_{t \in [T - \delta,T]}||Y_t^1-Y_t^2||_{H_{\alpha}}^2} & \leq & \dis{\E \sup_{t \in [T
- \delta,T]}|M_t|^2 \leq 2 \E|M_T|^2}= \\
& = & \dis{2G^2L_R^2{\delta}^{2(1-\alpha)} \E\sup_{t \in [T -
\delta,T]}||U_t^1-U_t^2||_{H_{\alpha}}^2.} \end{array}$$
If $\delta  \leq {\delta}_0={2GL_R}^{\frac{-1}{(1-\alpha)}}$, then $\Gamma$ is a
contraction with constant $1/2$.

Next we check that $\Gamma$ maps $\K$ into itself. For each $U \in \mathbb{K}$ and $t\in [T- \delta, T]$ with $\delta \leq
{\delta}_0$ we have
$$
\begin{array}{l}
\dis{\sup_{t \in [T- \delta, T]}||{\Gamma(U)}_t||_{H_{\alpha}}} = \dis{\sup_{t \in [T- \delta,
T]}||Y_t||_{H_{\alpha}}}  \leq   \dis{ \sup_{t \in [T- \delta,
T]}\E^{\mathcal{F}_t}||e^{(T-t)A}
\xi||_{H_{\alpha}}} +\\
 \quad \quad \quad\quad \quad\quad \quad\quad \quad
 \dis{+ \sup_{t \in [T- \delta, T]} {\E}^{\mathcal{F}_t}||\int_t^T
e^{(s-t)A}[f_0(s,U_s)+f_1(s)]ds
||_{H_{\alpha}}}\\
 \quad \quad\quad\quad  \leq  \dis{ R/2 +\sup_{t \in [T- \delta,
T]}{\E}^{\mathcal{F}_t}\int_t^T
||e^{(s-t)A}[f_0(s,U_s)+f_1(s)]||_{H_{\alpha}}} ds\\
\quad \quad \quad\quad \leq  \dis{ R/2 +\sup_{t \in [T- \delta,
T]}{\E}^{\mathcal{F}_t}\int_t^T
||e^{(s-t)A}[f_0(s,U_s)+f_1(s)]||_{D_A(\alpha,1)}}ds, \\
\end{array}
$$
where in the last inequality we have used the fact that $
D_A(\alpha,1)\subset H_{\alpha}$.
Now, by Proposition \ref{2.2.9lun}, and from 4.ii)
and 5., it follows that
$$
\begin{array}{l}
\dis{||e^{(s-t)A}[f_0(s,U_s)+f_1(s)]||_{D_A(\alpha,1)}}  \leq \\
\qquad \leq\dis{||e^{(s-t)A}||_{L(H,D_A(\alpha,1))} |f_0(s,U_s)+f_1(s)|_H }\\
 \qquad \leq  \dis{\frac{C_{\alpha}}{(s-t)^{\alpha}}[S(1+||U_s||_{H_{\alpha}}^{\gamma})+ C]}. \\
\end{array}
$$
Then, since $U \in \K$, we arrive at
$$
\begin{array}{l}
\dis{\sup_{t \in [T- \delta, T]}||{\Gamma(U)}_t||_{H_{\alpha}}} \leq \\
\quad \leq \dis{R/2 + \sup_{t \in [T-
\delta, T]}{\E}^{\mathcal{F}_t}\int_t^T
\frac{C_{\alpha}}{(s-t)^{\alpha}}[S(1+||U_s||_{H_{\alpha}}^{\gamma})+ C]ds}\\
 \quad \leq \dis{R/2 + \sup_{t \in [T- \delta, T]}\int_t^T
\frac{C_{\alpha}}{(s-t)^{\alpha}}[S(1+R^{\gamma})+ C]ds} \\
\quad \leq  \dis{ R/2+ C_{\alpha}S\frac{[(1+R^{\gamma})+C]}{1- \alpha}{\delta}^{1- \alpha}}
,\\
\end{array}
$$
where $C_{\alpha}$ depends on $A$, $\alpha$.
Hence, if $\delta \leq {\delta}_0$ is such that $C_{\alpha}S\frac{[(1+R^{\gamma})+C]}{1-
\alpha}{\delta}^{1- \alpha}$ is less or equal to $R/2$, then $\dis{\sup_{t \in [T- \delta, T]}||{\Gamma(U)}_t||_{H_{\alpha}}} \leq R$.
Due to Lemma \ref{phi}, $\P$-a.s. the function $t \mapsto Y_t -
{\E}^{\mathcal{F}_t}e^{(T-t)A}\xi$ belongs to $C[T-
\delta,T];H_{\alpha})$; moreover, the map $ t \mapsto
{\E}^{\mathcal{F}_t}e^{(T-t)A}\xi$ belongs to $C[T-
\delta,T];H_{\alpha})$, since $\xi$ is a random variable taking
values in $\overline{D(A)}^{H_{\alpha}}$. Therefore, $\P$-a.s.
$Y_{\cdot} \in C([T-\delta,T];H_{\alpha})$ and
$\Gamma$ maps $\mathbb{K}$ into itself and
has a unique fixed point in $\mathbb{K}$.
 \end{proof}

\begin{remark} \label{b1} \rm By Lemma \ref{phi}, using properties of analytic
semigroups, it can be proved that for every fixed $\omega$ the range of the map
$\Gamma$ is contained in $C^{1 - \beta}([T-\delta, T - \epsilon];D_A(\beta,1))$ for
every $\epsilon \in (0,\delta)$, $\beta \in [0,1]$. \end{remark}
\medskip
\subsection{Global existence}\label{GE}
Now we are able to prove a global existence theorem for the solution of the equation
(\ref{e}), using all the results presented above.

\begin{theorem}\label{cap4.T2}If Hypothesis \ref{H1}
is satisfied, the equation (\ref{e}) has a unique mild solution $(Y,Z) \in
L^2(\Omega;C([0,T], H_{\alpha})) \times L^2(\Omega \times [0,T]);\mathcal{L}^2(K;H))$.
\end{theorem}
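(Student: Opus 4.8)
The plan is to glue together the local solution produced by Theorem \ref{el} into a solution on all of $[0,T]$, using the a priori estimates of Propositions \ref{eH} and \ref{eHa} to guarantee that the length of each continuation step stays bounded below. The decisive feature is that the bound in Proposition \ref{eHa} depends on the $H$-bound $K$ but \emph{not} on the $H_{\alpha}$-size $R$ of the solution; this is exactly what rules out the usual ``shrinking step'' blow-up and keeps the continuation from being circular.

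For existence I would let $(Y,Z)$ be the maximal mild solution of (\ref{e}) with datum $\xi$, defined on a maximal interval $(\tau^{*},T]$ obtained by iterating Theorem \ref{el}, and argue that $\tau^{*}=0$. Suppose $\tau^{*}>0$. For every $a\in(\tau^{*},T)$ the restriction of $(Y,Z)$ to $[a,T]$ is a mild solution there, so Proposition \ref{eH} (the only place the dissipativity 4.iv enters) gives $\sup_{a\le t\le T}|Y_t|_H\le C_1$ with $C_1$ independent of $a$. Since $Y\in C([a,T];H_\alpha)$ it is also bounded in $H_\alpha$ on $[a,T]$, so Proposition \ref{eHa} applies with $K=C_1$ and yields
\[
|Y_t|_{L^{\infty}(\Omega,D_A(\theta,\infty))}\le \frac{C_2}{(T-t)^{\theta-\alpha}},\qquad a\le t<T,
\]
with $C_2$ depending on $K=C_1$ but neither on $R$ nor on $a$. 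Letting $a\downarrow\tau^{*}$ this holds on $(\tau^{*},T)$, and the embedding $D_A(\theta,\infty)\subset H_\alpha$ gives a uniform bound $\sup_{\tau^{*}<t\le(T+\tau^{*})/2}\|Y_t\|_{H_\alpha}\le B<\infty$. I would then fix $t_0=\tau^{*}+\epsilon$ with $\epsilon$ small; by Remark \ref{b1}, $Y_{t_0}\in D_A(\beta,1)\subset\overline{D(A)}^{H_{\alpha}}$ with $\|Y_{t_0}\|_{H_\alpha}\le B$, so Theorem \ref{el} (reread with final time $t_0$ in place of $T$, which is legitimate because the structural constants $G,L_R,C_\alpha,M_\alpha$ are bounded uniformly over subintervals of $[0,T]$) produces a mild solution on $[t_0-\delta_{*},t_0]$ with final datum $Y_{t_0}$, where $\delta_{*}>0$ depends only on $B$ through $\delta_0=(2GL_R)^{-1/(1-\alpha)}$ and $C_\alpha S\frac{(1+R^\gamma)+C}{1-\alpha}\delta^{1-\alpha}\le R/2$ with $R\asymp M_\alpha B$. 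The semigroup identity $e^{(t_0-t)A}e^{(s-t_0)A}=e^{(s-t)A}$ shows this piece concatenates with $(Y,Z)|_{[t_0,T]}$ into a mild solution of (\ref{e}) on $[t_0-\delta_{*},T]$; choosing $\epsilon<\delta_{*}$ gives $t_0-\delta_{*}<\tau^{*}$, contradicting maximality. Hence $\tau^{*}=0$.

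For uniqueness I would take two solutions $(Y^1,Z^1),(Y^2,Z^2)$ on $[0,T]$, set $R=\max_i\sup_t\|Y^i_t\|_{H_\alpha}<\infty$, and repeat verbatim the contraction estimate from the proof of Theorem \ref{el}, based on Lemma \ref{phi} applied to $v(t)=\int_t^{T} e^{(s-t)A}\bigl(f_0(s,Y^1_s)-f_0(s,Y^2_s)\bigr)ds$, to obtain
\[
\E\sup_{[T-\delta,T]}\|Y^1_t-Y^2_t\|^2_{H_\alpha}\le 2G^2L_R^2\,\delta^{2(1-\alpha)}\,\E\sup_{[T-\delta,T]}\|Y^1_t-Y^2_t\|^2_{H_\alpha}.
\]
For $\delta$ small (depending only on $R$ through $L_R$) this forces $Y^1\equiv Y^2$ on $[T-\delta,T]$; since on the next interval the tail integral over $[T-\delta,T]$ already cancels, the same estimate iterates with the \emph{same} step $\delta$ over $\lceil T/\delta\rceil$ intervals, giving $Y^1\equiv Y^2$ on $[0,T]$ and then $Z^1\equiv Z^2$ from (\ref{e}) via the It\^o isometry.

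The main obstacle is precisely the continuation step: one must prevent $\delta_{*}$ from collapsing to $0$ as $t_0\downarrow\tau^{*}$. This hinges entirely on the fact that the $H_\alpha$-control coming from Proposition \ref{eHa} is governed by a constant $C_2$ depending only on the uniform $H$-bound $C_1$ of Proposition \ref{eH}, and not on the a priori $H_\alpha$-radius $R$; were $C_2$ to depend on $R$, the bound $B$ near $\tau^{*}$ could grow without control and the argument would become circular. A secondary point requiring care is checking that the restarted datum $Y_{t_0}$ genuinely lies in $\overline{D(A)}^{H_{\alpha}}$, which is supplied by the parabolic smoothing recorded in Remark \ref{b1} together with the density of $D(A)$ in $D_A(\beta,1)$.
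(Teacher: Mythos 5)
Your existence argument is correct and is, at bottom, the paper's own proof in different packaging. The paper runs an explicit iteration with a \emph{fixed} step: after concatenation, Proposition \ref{eH} gives the same $H$-bound $C_1$ at every stage, and Proposition \ref{eHa} (applied on the whole interval up to $T$) gives $\|Y_{T-\delta_1-k\delta_2}\|_{L^\infty(\Omega,D_A(\theta,\infty))}\le C_2(\delta_1+k\delta_2)^{-(\theta-\alpha)}\le C_2\delta_1^{-(\theta-\alpha)}$ uniformly in $k$, so the same radius $R_2$ and hence the same step $\delta_2$ work forever; you run the identical mechanism as a maximal-interval contradiction. Both versions hinge on exactly the point you isolate: $C_2$ depends on the $H$-bound $K=C_1$ (uniform in $a$ by dissipativity) but not on $R$ or on $a$. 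One repair is needed in your version: hypothesis (\ref{K}) of Proposition \ref{eHa} asks for an a.s. bound by a \emph{deterministic} constant $R$, while ``$Y\in C([a,T];H_\alpha)$'' only yields an $\omega$-dependent supremum. This is harmless here, because $[a,T]$ with $a>\tau^*$ is covered by finitely many pieces produced by Theorem \ref{el}, each lying in a ball $\mathbb{K}$ of deterministic radius; but it should be said, since continuity alone does not give it.

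The uniqueness half has a genuine gap, and it is this same deterministic-versus-random issue, now fatal. For arbitrary competing solutions in $L^2(\Omega;C([0,T];H_\alpha))$, the quantity $R=\max_i\sup_t\|Y^i_t\|_{H_\alpha}$ is a random variable, merely a.s. finite; Hypothesis \ref{H1}.4.iii provides a Lipschitz constant $L_R$ only for deterministic $R$. In your contraction display $L_R$ is therefore random, it sits inside $\E^{\mathcal{F}_t}$, and no deterministic choice of $\delta$ can beat it; nor can one localize on $\{R\le n\}$, since that event is $\mathcal{F}_T$-measurable and does not pass through the conditional expectation. (The paper's Theorem \ref{el} escapes this only because its fixed-point uniqueness is uniqueness \emph{within} the deterministic ball $\mathbb{K}$; the paper's own global uniqueness claim is inherited from there and is equally terse.) The clean way to close the gap, entirely inside the paper's toolkit, is to abandon $L_R$ and use the dissipativity 4.iv, i.e. the energy computation of Proposition \ref{eH} (Yosida approximation and It\^o's formula), or Proposition 3.1 of \cite{CF} as invoked in the proof of Theorem \ref{cap4.T3}, applied to the difference of the two solutions: since the terminal values and $f_1$ coincide,
\[
\E\,|Y^1_t-Y^2_t|_H^2+\E\int_t^T\|Z^1_s-Z^2_s\|^2_{\mathcal{L}^2(K;H)}\,ds\;\le\;2\,\E\int_t^T\langle f_0(s,Y^1_s)-f_0(s,Y^2_s),\,Y^1_s-Y^2_s\rangle_H\,ds\;\le\;0,
\]
whence $Y^1\equiv Y^2$ and then $Z^1\equiv Z^2$, with no smallness condition on $\delta$ and no local Lipschitz constant at all. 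Your framing that 4.iv enters only through Proposition \ref{eH} is accurate for the existence scheme, but it is precisely dissipativity that rescues uniqueness in the full solution class.
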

\begin{proof} \rm By Theorem \ref{el} equation (\ref{e}) has a unique mild solution
$(Y^1,Z^1) \in L^2(\Omega;C([T - \delta_1,T], H_{\alpha})) \times L^2(\Omega \times [T-
\delta_1,T]);\mathcal{L}^2(K;H))$ on the interval $[T- \delta_1,T]$, for some $\delta_1>0$.
 By Proposition \ref{eH} we know that there exists a constant $C_1$ such that $\P$-a.s.
 \begin{equation}\label{d1H}|Y_{T-\delta_1}|_H \leq C_1.\end{equation}
 We recall that the constant $C_1$ depends only
on $|\xi|_{L^{\infty}( \Omega;H)}$ and on the constants $S$ of 4.ii) and $C$ of 5. and is independent of $\delta_1$.
Moreover, by Proposition \ref{eHa}, there exists a constant $C_2$
such that $\P$-a.s.
\begin{equation}\label{C2}||Y_{T - \delta_1}||_{L^{\infty}(\Omega, D_A(\theta,\infty))} \leq C_2
\frac{1}{\delta_1^{\theta - \alpha}},\end{equation}
with $C_2$ depending on the operator $A$, $||\xi||_{L^{\infty}(\Omega, H_{\alpha})}$,
$\theta$, $\alpha$, $C_1 $. This implies that $Y_{T- \delta_1}$
 belongs to $L^{\infty}(\Omega;H_{\alpha})$ and it can be taken as final value for the problem
\begin{equation}\begin{array}{l}\label{Y^2}Y_t -\int_t^{T- \delta_1} e^{(s-t)A}[f_0(s,Y_s)ds+f_1(s)]ds + \int_t^{T- \delta_1} e^{(s-t)A}Z_sdW_s=\\
\qquad \qquad \qquad \qquad = e^{(T-\delta_1 -t)A}Y_{T-\delta_1}
\end{array}
  \end{equation}
  on an interval $[T - \delta_1 - \delta_2, T-\delta_1]$, for some $\delta_2>0$.
  As in the proof of Theorem
\ref{el}, we fix a positive number $R_2$ such that
$$R_2=
2M_{\alpha}\frac{C_2}{{\delta_1}^{\theta - \alpha}} \geq 2M_{\alpha}||Y_{T -
\delta_1}||_{L^{\infty}(\Omega, D_A(\theta,\infty))}.$$

By Theorem \ref{el} there exists a pair of progressively
measurable processes $(Y^2,Z^2)$ in $L^2(\Omega;C([T -
\delta_1-\delta_2,T- \delta_1];H_{\alpha}))\times L^2(\Omega
\times [T- \delta_1-\delta_2,T-\delta_1];{\mathcal{L}}^2(K,H))$
which solves (\ref{Y^2}) on the interval $[T -\delta_1 -
\delta_2,T - \delta_1]$ where $\delta_2$ depends on the operator
$A$, $\alpha$, $R_2$. We note that the continuity in $T- \delta_1$
of $Y^2$ follows from the fact that $Y_{T- \delta_1}$ takes values
in $D_A(\alpha,1)$ (see Remark \ref{b1}), so that $Y_{T-
\delta_1}$ takes values in $\overline{D(A)}^{H_{\alpha}}$. Now,
the process $Y_t$ defined by $Y_t^1$ on the interval $[T
-\delta_1, T]$ and by $Y_t^2$ on $[T -\delta_1 - \delta_2,T -
\delta_1]$ belongs to $L^2(\Omega;C([T -
\delta_1-\delta_2,T];H_{\alpha})) $ and it easy to see that it
satisfies (\ref{e}) in the whole interval $[T -\delta_1 -
\delta_2,T]$. Consequently, by Proposition \ref{eH}, $\P$-a.s.,
$|Y_{T -\delta_1 -\delta_2}|_H \leq C_1$ with $C_1$ the constant
in (\ref{d1H}), and by (\ref{C2})
\begin{equation}\label{d2}
\begin{array}{l}
\dis{||Y_{T -\delta_1 -\delta_2}||_{L^{\infty}(\Omega,D_A(\theta, \infty))}}  \leq
\dis{\frac{C_2}{(\delta_1 + \delta_2)^{\theta- \alpha}}} \leq \dis{\frac{C_2}{{\delta_1}^{\theta- \alpha}}},
\end{array}
\end{equation}
where $C_2$ is the same constant as in (\ref{C2}).
Again, $Y_{T -\delta_1 -\delta_2}$ can be taken as initial value for problem
\begin{equation}\label{Y^3}\begin{array}{l}
\dis{Y_t -\int_t^{T -\delta_1 -\delta_2} e^{(s-t)A}[f_0(s,Y_s)ds+f_1(s)]ds + \int_t^{T
-\delta_1 -\delta_2}
e^{(s-t)A}Z_sdW_s=}\\
\quad \quad \quad \quad \quad \quad \quad \quad \quad\dis{ e^{(T -\delta_1 -\delta_2
-t)A}Y_{T-\delta_1-\delta_2}}\\
\end{array}
  \end{equation}
on the interval $[T -\delta_1 - \delta_2 -\delta_3,T - \delta_1 -\delta_2]$, where
$\delta_3$ will be fixed later. In this case, by (\ref{d2}), we can choose
$$R_3=
R_2=2M_{\alpha}\frac{C_2}{{\delta_1}^{\theta - \alpha}} \geq 2M_{\alpha}||Y_{T -
\delta_1- \delta_2}||_{L^{\infty}(\Omega,D_A(\theta, \infty))}$$ and prove that there
exists a unique mild solution $(Y^3,Z^3)$ of
(\ref{Y^3}) on the interval $[T -\delta_1 - \delta_2 -\delta_3,T - \delta_1 -\delta_2]$,
with $\delta_3= \delta_2$ . So we extend the solution to $[T -\delta_1- 2\delta_2,T]$.
Proceeding this way we prove the global existence to (\ref{e}) on $[0,T]$.
\end{proof}

\section{The general case}\label{3}

We can now study the equation:
\begin{equation}\label{pbdef}
Y_t -\int_t^T e^{(s-t)A}[f_0(s,Y_s)+f_1(s,Y_s,Z_s)]ds + \int_t^T e^{(s-t)A}Z_sdW_s=
e^{(T-t)A}\xi
\end{equation}

We require that the function $f_1$ satisfy the following assumptions:
\begin{hypothesis}\label{H2}
\end{hypothesis}
\begin{description}
  \item [1.] there exists $K \geq 0$ such that $\P$-a.s.

  $\quad\quad|f_1(t,y,z)-f_1(t,y^{'},z^{'})|_H\leq K|y-y^{'}|_H +
  K||z-z^{'}||_{{\mathcal{L}}^2(K;H)},$

for every $t \in [0,T], y,y^{'}\in H, z,z^{'}\in {\mathcal{L}}^2(K;H),$
  \item [2.] there exists $C \geq 0$ such that $\P$-a.s.
  $|f_1(t,y,z)|_H \leq C,$
  for every $t\in [0,T], y \in H, z \in {\mathcal{L}}^2(K;H)$.
  \end{description}

\begin{theorem} \label{cap4.T3} If Hypotheses \ref{H1} and \ref{H2} hold, then equation
(\ref{pbdef}) has a unique solution in $L^2(\Omega; C( [0,T];H_{\alpha})) \times
L^2(\Omega\times[0,T];{\mathcal{L}}^2(K;H))$.
\end{theorem}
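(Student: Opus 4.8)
The plan is to reduce Theorem \ref{cap4.T3} to the already-proven global existence result (Theorem \ref{cap4.T2}) for the simplified equation (\ref{e}) via a fixed-point argument, this time in the space that also records the $Z$-component. The key observation is that Hypothesis \ref{H2} makes $f_1$ bounded and globally Lipschitz in both $y$ and $z$, so freezing the $(Y,Z)$ appearing inside $f_1$ produces exactly an equation of type (\ref{e}): given a frozen pair $(U,V)$ in a suitable Banach space, define $\tilde f_1(s) := f_1(s,U_s,V_s)$, which is progressively measurable and bounded by $C$, hence satisfies assumption 5. of Hypothesis \ref{H1}. Theorem \ref{cap4.T2} then yields a unique mild solution $(Y,Z)$ of (\ref{e}) with this datum, and we set $\Phi(U,V) := (Y,Z)$.

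First I would fix the working space as $\mathcal{M} := L^2(\Omega; C([0,T];H_\alpha)) \times L^2(\Omega\times[0,T];\mathcal{L}^2(K,H))$ and show $\Phi$ maps $\mathcal{M}$ into itself, which is immediate from Theorem \ref{cap4.T2}. The crux is to obtain a contraction. I would take two frozen pairs $(U^1,V^1),(U^2,V^2)$ with images $(Y^1,Z^1),(Y^2,Z^2)$; subtracting the two integral equations (\ref{pbS1}) and applying the conditional expectation, the difference $Y^1_t - Y^2_t$ is driven by the ordinary integral of $e^{(s-t)A}[f_0(s,Y^1_s)-f_0(s,Y^2_s)] + e^{(s-t)A}[f_1(s,U^1_s,V^1_s)-f_1(s,U^2_s,V^2_s)]$. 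The $f_0$ part is handled exactly as in the contraction step of Theorem \ref{el}, using Lemma \ref{phi} and the local Lipschitz bound $L_R$; the $f_1$ part, by Hypothesis \ref{H2}.1, is bounded in $H$ by $K|U^1_s-U^2_s|_H + K\|V^1_s-V^2_s\|_{\mathcal{L}^2(K,H)}$, so Lemma \ref{phi} again converts the time integral into a factor $\delta^{1-\alpha}$. For the $Z$-difference I would instead use the It\^o/energy estimate underlying Proposition \ref{eH}: apply the It\^o formula to $|Y^1_t-Y^2_t|_H^2$, use dissipativity of $f_0$ (assumption 4.iv)) and the Lipschitz bound on $f_1$ to control $\E\int_0^T\|Z^1_s-Z^2_s\|_{\mathcal{L}^2(K,H)}^2\,ds$ by the $H_\alpha$-distance of the frozen data.

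The main obstacle is that one cannot expect a contraction on all of $[0,T]$ in a single step, because the constants $L_R$ and $K$ need not be small; the natural remedy is to run the fixed-point argument on a short subinterval $[T-\delta,T]$, choosing $\delta$ so small that the combined factor (a constant times $\delta^{1-\alpha}$ from the $f_0,f_1$ drift together with the $Z$-contribution) is strictly less than $1$, giving a contraction on $\mathcal{M}_\delta := L^2(\Omega; C([T-\delta,T];H_\alpha)) \times L^2(\Omega\times[T-\delta,T];\mathcal{L}^2(K,H))$. The unique fixed point solves (\ref{pbdef}) on $[T-\delta,T]$. I would then patch intervals exactly as in the proof of Theorem \ref{cap4.T2}: the a priori estimates of Propositions \ref{eH} and \ref{eHa} (now valid because $f_1$ is bounded and the dissipativity assumption 4.iv) is in force) show that the terminal value $Y_{T-\delta}$ lies in $L^\infty(\Omega;H_\alpha)$ with the right bound and in $\overline{D(A)}^{H_\alpha}$, so it is an admissible final condition; the subinterval length $\delta$ can be chosen uniformly because the governing constants $K,C,S,L_R$ do not depend on the interval, allowing the solution to be extended in finitely many steps of equal length to the whole of $[0,T]$. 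Uniqueness on each subinterval follows from the contraction, and the global uniqueness follows by concatenation.
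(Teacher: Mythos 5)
Your reduction (freezing $(U_s,V_s)$ inside $f_1$ and invoking Theorem \ref{cap4.T2} to define $\Phi$) is exactly the paper's, but your contraction step has a genuine gap. You measure the $Y$-component in the norm of $L^2(\Omega;C([T-\delta,T];H_\alpha))$ and propose to use Lemma \ref{phi} to extract a factor $\delta^{1-\alpha}$ from the drift differences. Lemma \ref{phi} is an $L^\infty((a,T);H)\to C^{1-\alpha}([a,T];H_\alpha)$ estimate: it needs a \emph{uniform-in-time} bound on the forcing. For the $f_1$-difference this would mean controlling $\sup_s\bigl(K|U^1_s-U^2_s|_H+K\|V^1_s-V^2_s\|_{\mathcal{L}^2(K;H)}\bigr)$, but the frozen $V^i$ live only in $L^2(\Omega\times[T-\delta,T];\mathcal{L}^2(K;H))$; their difference has no sup-in-$s$ control, so $\|f_1(\cdot,U^1,V^1)-f_1(\cdot,U^2,V^2)\|_{L^\infty((T-\delta,T);H)}$ is not dominated by the $\mathcal{M}_\delta$-distance of the two frozen pairs. (It is bounded by $2C$ thanks to Hypothesis \ref{H2}.2, but that yields a constant, not a contraction.) The alternative direct estimate $\bigl\|\int_t^Te^{(s-t)A}\phi(s)\,ds\bigr\|_{H_\alpha}\le C_\alpha\int_t^T(s-t)^{-\alpha}|\phi(s)|_H\,ds$ followed by Cauchy--Schwarz requires $\int_t^T(s-t)^{-2\alpha}ds<\infty$, i.e.\ $\alpha<1/2$, which is not assumed. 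So your map is not shown to contract in your chosen norm, however small $\delta$ is. A second, smaller, inaccuracy: in your difference equation the $f_0$-term involves the images $Y^1,Y^2$, not the frozen data, so it cannot be ``handled exactly as in the contraction step of Theorem \ref{el}'' (there the argument of $f_0$ is frozen); it must be absorbed into the left-hand side, and invoking $L_R$ presupposes that all images of $\Phi$ stay in one fixed ball of $C([0,T];H_\alpha)$, which you never establish.

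The paper sidesteps both problems by giving up sup-in-time $H_\alpha$ control in the contraction step: it works on $\mathbb{M}=L^2(\Omega\times[0,T];H)\times L^2(\Omega\times[0,T];\mathcal{L}^2(K;H))$ with the weighted norm $|||(Y,Z)|||^2_\beta=\E\int_0^Te^{\beta s}\bigl(|Y_s|^2_H+\|Z_s\|^2_{\mathcal{L}^2(K;H)}\bigr)ds$, applies the It\^o-type energy inequality (Proposition 3.1 of \cite{CF}) to the difference of two solutions, discards the $f_0$-difference by dissipativity (4.iv of Hypothesis \ref{H1}) --- so no local Lipschitz constant ever enters --- and chooses $\beta=4K^2+1$ to obtain a global contraction on all of $[0,T]$, with no interval subdivision or patching. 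The $H_\alpha$-continuity of the fixed point then comes for free, since $\Phi$ maps $\mathbb{M}$ into $L^2(\Omega;C([0,T];H_\alpha))\times L^2(\Omega\times[0,T];\mathcal{L}^2(K;H))$ by Theorem \ref{cap4.T2}. If you want to salvage your scheme, the fix is precisely this: measure the $Y$-difference in an $L^2$-in-time norm (or sup-in-$H$, via the martingale/energy estimate) where the It\^o inequality applies, rather than in $C([T-\delta,T];H_\alpha)$.
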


\begin{proof} \rm Let $\mathbb{M}$ be the space of
progressive processes $(Y,Z)$ in the space $L^2(\Omega \times [0,T];H)\times
L^2(\Omega\times[0,T];{\mathcal{L}}^2(K;H))$ endowed with the norm
$$|||(Y,Z)|||_{\beta}^2={\E}\int_0^T e^{\beta s}(|Y_s|_H^2+
||Z_s||_{{\mathcal{L}}^2(K;H)}^2)ds,$$ where $\beta$ will be fixed later.
We define $\Phi:\mathbb{M}\rightarrow \mathbb{M}$ as follows: given
$(U,V)\in \mathbb{M}$, $(Y,Z)= \Phi(U,V)$ is the unique solution on the interval $[0,T]$ of the equation
$$Y_t -\int_t^T e^{(s-t)A}[f_0(s,Y_s)ds+f_1(s,U_s,V_s)]ds + \int_t^T
e^{(s-t)A}Z_sdW_s= e^{(T-t)A}\xi.$$ By Theorem
\ref{cap4.T2} the above equation has a unique mild solution
$(Y,Z)$ which belongs to $L^2(\Omega; C( [0,T];H_{\alpha})) \times
L^2(\Omega\times[0,T];{\mathcal{L}}^2(K;H))$. Therefore $
\Phi(\mathbb{M}) \subset \mathbb{M}$. We will show that $\Phi$ is
a contraction for a suitable choice of $\beta$; clearly, its
unique fixed point is the required solution of (\ref{pbdef}).
 We take
another pair $(U^{'},V^{'}) \in \mathbb{M}$ and apply Proposition 3.1 in \cite{CF}
 to the difference of two equations. We obtain
$$ \begin{array}{l}
\dis{\E\int_0^T e^{\beta s}\big[ \beta|Y_t^1-Y_t^2|_H^2 +\|Z_s^1
-Z_s^2\|_{{\mathcal{L}}^2(K;H)}^2ds\big]} \leq\\
  \leq \dis{2\E\int_0^T e^{\beta s}
<f_0(s,Y_s^1)+f_1(s,U_s^1,V_s^1)+}\\
 \quad  \quad   \dis{-f_0(s,Y_s^2)-f_1(s,U_s^2,V_s^2),Y_s^1-Y_s^2>_Hds} \\
   \leq \dis{2\E\int_0^T e^{\beta s}K(|U_s^1-U_s^2|_H +
||V_s^1
-V_s^2||_{{\mathcal{L}}^2(K;H)})|Y_s^1-Y_s^2|_Hds} \\
 \leq \dis{\E\int_0^T e^{\beta s}(|U_s^1-U_s^2|_H^2 +
||V_s^1 -V_s^2||_{{\mathcal{L}}^2(K;H)}^2)/2
+ 4K^2|Y_s^1-Y_s^2|_H^2ds},\\
\end{array}
$$
where we have used 4.iv) of Hypothesis 2 and 1. of Hypothesis
\ref{H2}.
Choosing $\beta =4K^2 +1$, we obtain the required contraction property. \end{proof}

\section{Applications}\label{4}

In this section we present some backward stochastic partial differential problems which
can be solved with our techniques.

\subsection{The reaction-diffusion equation}

Let $D$ be an open and bounded subset of $\mathbb{R}^n$ with a
smooth boundary $\partial D$. We choose $K=L^2(D)$. This choice
implies that $dW_t/dt$ is the so-called "space-time white noise".
Moreover, since Hilbert-Schmidt operators on $L^2(D)$ are
represented by square integrable kernels, the space
$\mathcal{L}^2(L^2(D),L^2(D))$ can be identified with $L^2(D
\times D)$. We are given a complete probability space $(\Omega,
\mathcal{F},\P)$ with a filtration $(\mathcal{F}_t)_{t \in [0,T]}$
generated by $W$ and augmented in the usual way. 
Let us consider a non symmetric bilinear, coercive
continuous form $a :H_0^1(D)\times H_0^1(D)\rightarrow \mathbb{R}$ defined by
$a(u,v):=-\int_{D} \sum_{i,j}^n a_{ij}(x)D_iu(x)D_jv(x) dx,$
where the coefficients $a_{ij}$ are Lipschitz continuous and there exists $\alpha
>0$ such that $\sum_{i,j=1}^n  a_{ij}(x)\xi_i \xi_j \geq \alpha|\xi|^2$ for every $x \in \overline{D}$, $\xi \in {\mathbb{R}}^n$. Let $A$ be the
operator associated with the bilinear form $a$ such that $ <Au,v>_{L^2(D)}=a(u,v)$, $v \in H_0^1(D)$ and $u \in D(A)$. It is known that, in this case, $D(A)=H^2(D) \cap H_0^1(D)$, where $H^2(D)$ and $H_0^1(D)$ are the usual Sobolev spaces. 

We consider for $t \in [0,T]$ and $x \in D$ the backward
stochastic problem written formally
\begin{equation} \label{heat.eq}
\left \{ \begin{array}{lll}
 \dis{\frac{\partial Y(t,x)}{\partial t} = A Y(t,x) +r(Y(t,x))} & + & \dis{
 g(t,Y(t,x),Z(t,x),x)+}\\
& + & \dis{Z(t,x)\frac{\partial W(t,x)}{\partial t} }   \\
& &       \quad \quad  \quad \quad\quad \textrm{on $\Omega \times [0,T]\times \bar{D}$}\\
\dis{Y(T,x)=\xi (x)} & & \quad \quad  \quad \quad\quad  \textrm{on $\Omega \times \bar{D}$}\\
\dis{Y(t,x)=0} & &  \quad \quad  \quad \quad \quad  \textrm{on $\Omega \times [0,T] \times \partial D$}\\
\end{array} \right.
\end{equation}
We suppose the following.
\begin{hypothesis}\label{ip.g}
\end{hypothesis}
 \rm
\begin{description}
  \item [1.] $r :\R \rightarrow \R$ is a continuous, increasing and locally Lipschitz function;
  \item [2.] $r$ satisfies the
  following growth condition: $|r(x)| \leq S(1 +|x|^{\gamma}) \quad \forall x \in \R$ for some $\gamma >1$;
  \item [3.] $g$ is a measurable real function defined on $[0,T] \times \R \times L^2(D \times D) \times D$ and there exists a constant $K>0$ such that
  $$|g(t,y_1,z_1,x)-g(t,y_2,z_2,x)| \leq K(|y_1 - y_2| + ||z_1 -z_2||_{L^2(D \times D)})$$ for all $t \in[0,T]$, $y_1,y_2 \in \R$, $z_1,z_2 \in L^2(D)$, $x \in D$;
  \item [4.] there exists a real function $h$ in $L^2(D \times D)$ such that $\P$-a.s. $|g(t,y,z,x)| \leq K_1 h(x)$ for all $t \in [0,T]$, $y \in \R$, $z \in L^2(D)$, $x \in D$;
  \item [5.] $\xi$ belongs to $L^{\infty}(\Omega;H^2(D) \cap
  H_0^1(D))$.
\end{description}

We define the operator $A$ by
$(Ay)(x)= Ay(x)$  with domain $D(A)=  H^2(D) \cap H_0^1(D)$.
We set $f_0(t,y)(x)= -r(y(t,x))$ for $t \in [0,T]$, $x \in D$ and $y$ in a suitable subspace
of $H$ which will be determined below. For $t \in [0,T]$, $x \in D$,
$y \in L^2(D)$, $z \in L^2(D \times D)$ we define $f_1$ as the operator
$f_1(t,y,z)(x)= -g(t,y(t,x),z(t,x),x)$.
Then problem (\ref{heat.eq}) can be written in abstract way as

$\quad \quad  dY_t = -AY_tdt - f_0(t,Y_t)dt - f_1(t,Y_t,Z_t)dt +
Z_t dW_t, \quad
Y_T=\xi . $

Under the conditions in Hypothesis \ref{ip.g}, the
assumptions in Hypotheses \ref{H1}, \ref{H2} are satisfied.
The operator $A$ is a closed operator in $L^2(D)$ and it is the
infinitesimal generator of an analytic semigroup in $L^2(D)$ satisfying
$\|e^{tA}\|_{\mathcal{L}(H)} \leq 1 $ (see \cite{T}, Chapter 3). In particular, by Lumer-Philips theorem, $A$ is dissipative.
The non linear function $f_0(t,\cdot): L^{2\gamma}(D) \rightarrow L^2(D)$,
$ y \mapsto - r(y)$
is locally Lipschitz. We look for a space of class $J_{\alpha}$ between $H$ and $D(A)$
where $f_0$ is well defined and locally Lipschitz. It is well known (see \cite{Tr}) that
the fractional order Sobolev space $W^{\beta,2}(D)$ is of class $J_{\beta/2}$ between $L^2(D)$ and $H^2(D)$ for every
$\beta \in(0,2)$. Hence the space $H_{\alpha}$ defined by
$H_{\alpha}= W^{\beta,2}(D)$ if $\beta<1$, by $W^{\beta,2}(D) \cap H_0^1(D)$ if $\beta \geq 1$
is of class $J_{\beta/2}$ between H and $D(A)$. Moreover the restriction of $A$ on
$H_{\alpha}$ is a sectorial operator (\cite{Tr}).
By the Sobolev embedding theorem, $W^{\beta,2}$ is contained in $L^q(D)$ for all $q$ if $\beta\geq \frac{n}{2}$,
and in $L^{2n/(n-2\beta )}(D)$ if $\beta <\frac{n}{2}$.
 If we choose $\beta \in(0,2)$ we have $W^{\beta,2}(D) \subset
L^{2\gamma}(D)$ for $n<4 \frac{\gamma}{\gamma-1}$. It is clear that $f_0$ is locally Lipschitz with
respect to $y$ from $H_{\alpha}$ into $H$.
It is easy to verify that $f_0$ satisfies 4.ii) of Hypothesis \ref{H1} with $\gamma =2n+1$ and that it is dissipative with constant $\mu=0$.
The function $f_1$ is Lipschitz uniformly with
respect to $y$ and $z$ and it is bounded.
The final condition $\xi$ takes values in $\overline{D(A)}^{H_{\alpha}}$ and belongs to
 $L^{\infty}(\Omega;H_{\alpha})$.
Hence we can apply the global existence theorem and state that the above problem
has a unique mild solution $(Y,Z)\in L^2 (\Omega; C( [0,T];H_{\alpha}))\times L^2(\Omega
\times [0,T];{\mathcal{L}}^2(K,H))$.

\subsection{A spin system}\label{spin}
Let $\mathbb{Z}$ be the one-dimensional lattice of integers. Its elements will be
interpreted as atoms. A \emph{configuration} is a  real function $y$ defined
on $\mathbb{Z}$. The value $y(n)$ of the configuration $y$ at the point $n$ can be
viewed as the state of the atom $n$.

We consider an infinite system of equations
\begin{equation} \label{spinsys}
dY_t^n=-a_nY_t^n dt + \sum_{|n-j| \leq 1} V(Y_t^n -Y_t^j)dt +Z_t^ndW_t^n \quad n \in \mathbb{Z},\quad 0 \leq t\leq T
\end{equation}
$\quad Y_n(T)= \xi_n   \quad n \in \mathbb{Z},$

where $Y^n$ and $Z^n$ are real processes, and $V: \R \rightarrow \R$.

Let $l^2(\Z)$ be the usual Hilbert space of square summable sequences. To study system
(\ref{spinsys}) we apply results of previous sections. To fit our assumption in
Hypotheses \ref{H1} and \ref{H2}, we suppose the following

\begin{hypothesis}

\label{ip.spin} \rm
\end{hypothesis}
\begin{description}

  \item [1.] $W^n$, $ n \in \Z$ are independent standard real Wiener processes;

  \item [2.] $a=\{a_n \}_{n \in \Z}$ is a sequence of nonnegative real numbers;
  \item [3.] $\xi =\{\xi_n \}_{n \in \Z}$
is a random variable belonging to $L^{\infty}(\Omega,l^2(\Z))$;
  \item [4.] the function $V : \R \rightarrow \R$ is defined by
$V(x)= x^{2k+1}\quad k \in \mathbb{N}.$
\end{description}

We will study system (\ref{spinsys}) regarded as a backward stochastic evolution
equation for $t \in [0,T]$
\begin{equation} \label{spineq} dY_t=(AY_t + f_0(t,Y_t))dt + Z_tdW_t, \quad
   Y_T= \xi
 \end{equation}
on a properly chosen Hilbert space $H$ of
functions on $\mathbb{Z}$.

To reformulate problem (\ref{spinsys}) in the abstract form
(\ref{spineq}), we set $K=H=l^2(\Z)$. We set $W_t= \{W_t^n\}_{n
\in \mathbb{Z}}, t \in [0,T]$. By 1. of Hypothesis \ref{ip.spin},
$W$ is a cylindrical Wiener process in $H$ defined on $(\Omega,
\mathcal{F}, P)$. We define the operator $A$ by

$\quad \quad A(y)=(a_n y_n )_n, \quad D(A)=\{ y \in l^2(\mathbb{Z}) \mbox{ such that } \sum_{n \in \mathbb{Z}} a_n^2y_n^2 <
\infty \}.$

It is easy to prove that $A$ is  a self-adjoint operator in $l^2(\Z)$, hence the infinitesimal generator of a sectorial
semigroup. The coefficient $f_0$ is given by
$(f_0(t,y))_n=(V(y_{n+1} -y_n) +V( y_{n-1}- y_n)), \quad t \in [0,T], y \in D(f_0)$ where
$D(f_0)= \{ y \in l^2(\mathbb{Z}) \mbox{ such that } \sum_{n \in \mathbb{Z}} |x_{n+1} -x_n|^{2(2k+1)}< + \infty \}.$
Under Hypothesis \ref{ip.spin}, $A,f_0, \xi$ satisfy Hypotheses \ref{H1} and \ref{H2}.
We observe that in this case the domain of $f_0$ is the whole space $H$: if $y \in
l^2(\mathbb{Z})$ then
$$
\begin{array}{l}
\dis{\{ \sum_{n \in \mathbb{Z}} |y_{n+1} -y_n|^{2(2k+1)}\}^{\frac{1}{2(2k+1)}}} \leq
\dis{ \{\sum_{n \in \mathbb{Z}} |y_{n+1} -y_n|^2 \}^{\frac{1}{2}}}
  \leq \dis{2||y||_{l^2(\mathbb{Z})}}.\\
 \end{array}$$
 Consequently, we can take $H_{\alpha}$ with $\alpha=0$, i.e. $H_0=H$.
The function $f_0$ is dissipative. Namely
$$
\begin{array}{l}
\dis{<f_0(t,y) - f_0(t,y'),y - y'>_{l^2(\mathbb{Z})} } =\\
\qquad
=\dis{\sum_{n \in \mathbb{Z}} \{[(y_{n+1} -y_n)^{(2k+1)}+ (y_{n-1} -y_n)^{(2k+1)}] +}\\
\qquad \dis{+ [(y'_{n+1} -y'_n)^{(2k+1)} + (y'_{n-1} -y'_n)^{(2k+1)}])[y_n -y'_n]}=\\
\dis{=-\sum_{n \in \mathbb{Z}} [(y_{n+1} -y_n)^{(2k+1)}- (y'_{n+1}
-y'_n)^{(2k+1)}][(y_{n+1} -y_n)-(y'_{n+1} -y'_n)]}\\
\end{array}
$$
and the last term is negative.
Moreover, $f_0$ satisfies 4.ii) of Hypothesis \ref{H1} with $\gamma=2k+1$.
The map $f_0$ is also locally Lipschitz from $H$ in to $H$.
Then by Theorem \ref{cap4.T3}, problem (\ref{spineq}) has a unique mild solution $(Y,Z)$
which belongs to $ L^2(\Omega,C([0,T];H)) \times L^2(\Omega \times
[0,T];\mathcal{L}^2(K,H))$.

\section{Appendix}
This section is devoted to the proof of Lemma \ref{Gronwall2}. Assume first that $\beta=1$. Using recursively the inequality $\dis{ U_t  \leq    a(T-t)^{-\alpha} +b \int_t^T \E^{\mathcal{F}_t}U_sds }$
we can easily prove that

$U_t \leq  $

$\quad \,\dis{\leq a (T-t)^{-\alpha} +\int_t^T
a\sum_{k=1}^{n-1}b^k\frac{(r-t)^{k-1}}{(k-1)!}\frac{1}{(T-r)^{\alpha}}dr +}\\
\qquad \qquad \qquad \qquad \dis{ \qquad \qquad    +b\E^{\mathcal{F}_t}\int_t^T \frac{(b(r-t))^{n-1}}{(n-1)!}U_rdr.}
$

The last term in the above inequality
tends to zero as $n$ tends to infinity for each $t
$ in the interval $[0,T]$. Thus
$$
\begin{array}{lll}
\dis{U_t} & \leq  & \dis{a (T-t)^{-\alpha} + a\sum_{k=1}^{\infty}\int_t^T
\frac{b^{k}(T-t)^{k-1}}{(k-1)!}\frac{1}{(T-r)^{\alpha}}dr}\leq \\
& \leq  & \dis{a (T-t)^{-\alpha} + abe^{b(T-t)}\int_t^T\frac{1}{(T-r)^{\alpha}}dr} \leq\\
 & \leq  & \dis{a (T-t)^{-\alpha} + abe^{b(T-t)}\frac{1}{1 -\alpha}{(T-t)^{1-\alpha}}} \leq  \dis{a (T-t)^{-\alpha}M}\\
\end{array}
$$
where $M=1 + be^{bT}\frac{1}{1 -\alpha}{T}$.

In the case $\beta \neq 1$ a similar proof can be given, based on recursive use of the inequality
$\quad \dis{U_t}  \leq  \dis{a (T-t)^{-\alpha} +b \int_t^T
(s-t)^{\beta-1}\E^{\mathcal{F}_t}U_sds}.$

{\bf Acknowledgments}:

I wish to thank Giuseppe Da Prato for hospitality at the Scuola
Normale Superiore in Pisa, suggestions and helpful discussions. I
would like to express my gratitude to Marco Fuhrman: I am indebted
to him for his precious help and encouragement. Special thanks are
due to Alessandra Lunardi, who gave me valuable advice and
support.

\end{document}